\newtheorem{theorem}{Theorem}[section]
\newtheorem{lemma}[theorem]{Lemma}
\newtheorem{remark}[theorem]{Remark}
\newenvironment{proof}[1][Proof]{\noindent\textbf{#1.} }{\ \rule{0.5em}{0.5em}}
\newcommand{\B}[1]{\boldsymbol{#1}}
\begin{document}

\bigskip

\bigskip
\begin{frontmatter}

\title{Two one-parameter families of nonconforming enrichments of the Crouzeix–Raviart finite element}
  \author[address-CS,address-Pau]{Federico Nudo\corref{corrauthor}}
  \cortext[corrauthor]{Corresponding author}
  \ead{federico.nudo@unical.it}

  \address[address-CS]{Department of Mathematics and Computer Science, University of Calabria, Rende (CS), Italy}
  \address[address-Pau]{Laboratoire de Mathématiques et de leurs Applications, UMR CNRS 5142, Université de Pau et des Pays de l'Adour (UPPA), 64000 Pau, France}

\begin{abstract}
In this paper, we introduce two one-parameter families of quadratic polynomial enrichments designed to enhance the accuracy of the classical Crouzeix--Raviart finite element. These enrichments are realized by using weighted line integrals as enriched linear functionals and quadratic polynomial functions as enrichment functions. To validate the effectiveness of our approach, we conduct numerical experiments that confirm the improvement achieved by the proposed method.
\end{abstract}

\begin{keyword}
Crouzeix--Raviart finite element\sep Enriched finite element method \sep Nonconforming finite element \sep Beta function
\end{keyword}

\end{frontmatter}

\section{Introduction}
The analysis of various physical phenomena often involves the widespread application of the finite element method, a powerful and versatile tool for approximating solutions of partial differential equations (PDEs)~\cite{ciarlet2002finite}. One of the reasons for its undeniable popularity lies in its ability to handle various types of geometries. In this method, the problem domain is divided into polytopes known as \textit{finite elements}. Each finite element represents a local solution of the differential problem, and their collective contributions construct a global piecewise solution. If the global piecewise solution remains continuous at the boundary of the subdomains, the finite element is referred to as \textit{conforming}; otherwise, it is referred to as \textit{nonconforming}.

A finite element is locally defined as the triple $\mathcal{M}_d=(S_d, \mathbb{F}_{S_d}, \Sigma_{S_d})$, where~\cite{Guessab:2022:SAB}
\begin{itemize}
       \item $S_d$ is a polytope in $\mathbb{R}^d$, $d\ge 1$,
       \item $\mathbb{F}_{S_d}$  is a vector space of dimension $n$ formed by real-valued functions defined on $S_d$, also known as \textit{trial functions},
       \item $\Sigma_{S_d}=\{L_j \, : \, j=1,\dots,n\}$ is a set of linearly independent linear functionals from the vector space $\mathbb{F}_{S_d}$, referred to as \textit{degrees of freedom},
\end{itemize}
ensuring that $\mathbb{F}_{S_d}$ is $\Sigma_{S_d}$-unisolvent, meaning that if $f\in \mathbb{F}_{S_d}$ and 
\begin{equation*}
      L_j(f)=0, \qquad j=1,\dots,n,
\end{equation*}
then $f=0$. To enhance the local approximation accuracy in the finite element method, a commonly employed technique involves introducing \textit{enrichment elements}~\cite{Guessab:2016:AADM, Guessab:2016:RM, guessab2017unified, DellAccio:2022:QFA, DellAccio:2022:AUE, DellAccio:2022:ESF, DellAccio2023AGC, DellAccio2023nuovo}. Specifically, to enrich the finite element $\mathcal{M}_d$, we introduce a set of enrichment functions ${e_1,\dots,e_N}$ and a set of enriched linear functionals
\begin{equation*}
\Sigma^{\mathrm{enr}}_{S_d}=\left\{L_j\, : \, j=1,\dots,n+N\right\},
\end{equation*}
ensuring that the approximation space $\mathbb{F}_{S_d}$ is $\Sigma^{\mathrm{enr}}_{S_d}$-unisolvent. We then define the enriched finite element
\begin{equation*}
\mathcal{M}^{\mathrm{enr}}_d=\left(S_d,\mathbb{F}^{\mathrm{enr}}_{S_d}, \Sigma^{\mathrm{enr}}_{S_d}\right),
\end{equation*}
where
\begin{equation*}
\mathbb{F}^{\mathrm{enr}}_{S_d}=\mathbb{F}_{S_d}\oplus\{e_1,\dots,e_N\}.
\end{equation*}
 The crucial aspect lies in the selection of the enrichment functions $e_1,\dots,e_N$ to ensure that the triple $(S_d, \mathbb{F}^{\mathrm{enr}}_{S_d}, \Sigma^{\mathrm {enr}}_{S_d})$ is a finite element.

The Crouzeix--Raviart finite element, named in honor of mathematicians Vivette Crouzeix and Pierre-Arnaud Raviart, is widely used in numerical analysis for numerically solving second-order elliptic PDEs. Its nonconforming nature proves highly effective in handling challenges posed by intricate geometries and irregular meshes~\cite{CR1973:2022:CR}. Within the realm of the Crouzeix–Raviart finite element method, the careful selection of approximation spaces plays a crucial role. These spaces are chosen to efficiently represent discontinuous solutions, making the method particularly well-suited for problems with singularities or discontinuities. This distinctive feature has established the Crouzeix--Raviart finite element as a valuable tool across various fields, including fluid dynamics, structural mechanics, and other field where obtaining accurate and flexible numerical solutions to PDEs is essential~\cite{chatzipantelidis1999finite, hansbo2003discontinuous, burman2005stabilized, zhu2014analysis, younes2014combination, di2015extension, ve2019quasi, chen2019finite}.
In this paper, we present two one-parameter families of quadratic polynomial enrichments for the classical Crouzeix--Raviart finite element. These enrichments are obtained by using weighted line integrals as enriched linear functionals and quadratic polynomial functions as enrichment functions.

The paper is organized as follows. In Section~\ref{s1}, we introduce a new one-parameter family of enrichment of the classical Crouzeix--Raviart finite element using quadratic polynomial functions. Additionally, we provide an explicit expression for the associated basis functions and introduce a new quadratic approximation operator based on this enriched finite element. In line with the previous section, in Section~\ref{sec3} we propose another class of enrichment of the Crouzeix--Raviart finite element by employing quadratic polynomial functions. Numerical results are presented in Section~\ref{sec4}.

\section{A first one-parameter family of enrichment of the Crouzeix--Raviart finite element}\label{s1}
Let $T \subset \mathbb{R}^2$ be a nondegenerate triangle with vertices $\B{v}_1, \B{v}_2, \B{v}_3$. Let $\lambda_i$ denote the barycentric coordinate associated to the vertex $\B{v}_i$, and let $\Gamma_i$ represent the edge of $T$ opposite the vertex $\B{v}_i$, $i=1,2,3$. These functions are affine and satisfy the following properties
\begin{equation}\label{bary}
\lambda_i(\B{v}_j) = \delta_{ij},
\qquad i,j=1,2,3,
\end{equation}
and
\begin{equation}\label{prop2}
\lambda_i(t \B{x} + (1-t) \B{y}) = t \lambda_i(\B{x}) + (1-t)\lambda_i(\B{y}), \qquad \B{x}, \B{y}\in T,
\end{equation}
where $\delta_{ij}$ is the Kronecker delta function, that is
\begin{equation*}
\delta_{ij}=
\begin{cases}\begin{aligned}
    &1, \quad \text{if } i=j, \\
    &0, \quad \text{if } i\neq j.
\end{aligned}
\end{cases}
\end{equation*}
By~\eqref{bary} and~\eqref{prop2}, we immediately get
\begin{equation}\label{propstar}
\lambda_i(\B{x})=0, \qquad \B{x}\in \Gamma_i, \qquad i=1,2,3.
\end{equation}
Hereafter, we adopt the standard convention that 
\begin{equation*}
    \B{v}_4=\B{v}_1, \quad \B{v}_5=\B{v}_2
\end{equation*}
and that 
\begin{equation*}
    \lambda_4=\lambda_1, \quad  \lambda_5=\lambda_2.
\end{equation*}
We denote by $\mathcal{I}_j^{\mathrm{CR}}$, $j=1,2,3,$ the following linear functional
\begin{equation*}
 \mathcal{I}_j^{\mathrm{CR}}(f) = \int_{0}^{1}f\left(t\B{v}_{j+1}+ (1-t)\B{v}_{j+2}\right)\, dt, \qquad j=1,2,3.
\end{equation*}
The Crouzeix--Raviart finite element is defined locally as the triple
\begin{equation*}
\left(T,\mathbb{P}_{1}(T), \Sigma_T^{\mathrm{CR}}\right),    
\end{equation*}
 where $\mathbb{P}_{1}(T)$ is the space of all bivariate linear polynomials, that is
 \begin{equation*}
     \mathbb{P}_{1}(T)=\operatorname{span}\{\lambda_1,\lambda_2,\lambda_3\}
 \end{equation*}
 and 
 \begin{equation*}
\Sigma_T^{\mathrm{CR}}=\left\{\mathcal{I}_j^{\mathrm{CR}}\, : \, j=1,2,3\right\}.    
 \end{equation*}
The main goal of this section is to construct a parametric family of quadratic enrichments of the Crouzeix--Raviart finite element. To achieve this, we introduce a real parameter $\alpha > -1$. Using this parameter, we define the following enriched linear functionals
  \begin{equation*}
\mathcal{F}_{j,\alpha}^{{\mathrm{enr}}}(f) = \int_{0}^{1}w_\alpha(t)f\left(t\B v_{j}+ (1-t)
\B m_{j}
\right)\, dt, \qquad j=1,2,3,
\end{equation*} 
 where 
\begin{equation*}
    \B m_{1}=\frac{\B v_{2}+\B v_{3}}{2}\in \Gamma_1, \qquad  \B m_{2}=\frac{\B v_{3}+\B v_{1}}{2}\in \Gamma_2, \qquad  \B m_{3}=\frac{\B v_{1}+\B v_{2}}{2}\in \Gamma_3, 
\end{equation*}
 and
\begin{equation*}
     w_\alpha(t) = t^{\alpha} (1 - t)^{\alpha}
\end{equation*}
is the Jacobi weight function defined on $[0,1]$.
The one-parameter family of triples is then defined as
 \begin{equation}\label{trips}
   \mathcal{C}_{\alpha}= (T,  \mathbb{P}_2(T),\Sigma_{T,\alpha}^{{\mathrm{enr}}}),
 \end{equation}
 where $\mathbb{P}_{2}(T)$ is the space of all bivariate quadratic polynomials, that is
 \begin{equation*}
     \mathbb{P}_{2}(T)=\operatorname{span}\{\lambda_1,\lambda_2,\lambda_3, \lambda_1^2,\lambda_2^2,\lambda_3^2\}
 \end{equation*}   
 and
 \begin{equation*}
\Sigma_{T,\alpha}^{{\mathrm{enr}}}=  \left\{\mathcal{I}_j^{\mathrm{CR}},\mathcal{F}^{\mathrm{enr}}_{j,\alpha}\, : \, j=1,2,3\right\}
\end{equation*}
is the set of enriched degrees of freedom. 
In the following, we prove that for 
\begin{equation*}
    \alpha\neq -\frac{6}{7},
\end{equation*}
 the triple $ \mathcal{C}_{\alpha}$ defined in~\eqref{trips} is a finite element. To establish this, we require some preliminary results.
First and foremost, we unveil explicit formulations for the basis functions of $\mathbb{P}_2(T)$
\begin{equation*}
\mathcal{B}_{AF3}=\{\varphi_i, \phi_i\, : \, i=1,2,3\},
\end{equation*}
designed to satisfy conditions~\cite[Ch. 2]{Davis:1975:IAA}
\begin{equation*}
\mathcal{L}^{\mathrm{enr}}_j(\varphi_i)=\delta_{ij}, \quad  \mathcal{I}_j^{\mathrm{CR}}(\varphi_i)=0, \quad i,j=1,2,3,
\end{equation*}
\begin{equation*}
    \mathcal{L}^{\mathrm{enr}}_j(\phi_i)=0, \quad  \mathcal{I}_j^{\mathrm{CR}}(\phi_i)=\delta_{ij}, \quad i,j=1,2,3,
\end{equation*}
where
\begin{equation}\label{linfunAF3}
\mathcal{L}_j^{\mathrm{enr}}(f)= f(\B{v}_j), \qquad j=1,2,3.
\end{equation}
These functions are recognized as the basis functions of $\mathbb{P}_2(T)$ associated to the enriched finite element~\cite{DellAccio:2022:AUE}
\begin{equation}\label{AF3}
AF3= \left(T,\mathbb{P}_2(T),{\Sigma}_T^{\mathrm{enr}}\right),
\end{equation}
where
\begin{equation*}
{\Sigma}_T^{{\mathrm{enr}}}= \left\{\mathcal{I}_j^{\mathrm{CR}},\mathcal{L}^{\mathrm{enr}}_j\, : \, j=1,2,3\right\}.
\end{equation*}

\begin{theorem}\label{th2}
        The basis functions $\varphi_i,\phi_i$,  $i=1,2,3,$ of $\mathbb{P}_2(T)$ associated to the finite element $AF3$ have the following expressions
\begin{equation}
\varphi_1 = \lambda_1 (1-  3\lambda_2  -3 \lambda_3), \qquad  \varphi_2=\lambda_2 (1-  3\lambda_1  -3 \lambda_3), \qquad \varphi_3 = \lambda_3 (1-  3\lambda_1  -3 \lambda_2),
    \label{basisphi}
\end{equation}
\begin{equation}
\phi_1 = 6 \lambda_2\lambda_3,\qquad 
   \phi_2 = 6 \lambda_1\lambda_3,\qquad
    \phi_3 = 6\lambda_1\lambda_2.
    \label{basisvarphi}
\end{equation}
      \end{theorem}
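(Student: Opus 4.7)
The plan is to verify directly that the six polynomials listed in~\eqref{basisphi} and~\eqref{basisvarphi} belong to $\mathbb{P}_2(T)$ and satisfy the prescribed duality conditions with respect to $\Sigma_T^{\mathrm{enr}}$; since $\mathrm{card}(\Sigma_T^{\mathrm{enr}}) = 6 = \dim \mathbb{P}_2(T)$, verifying the $6\times 6$ Kronecker identities simultaneously proves unisolvence and uniquely identifies these functions as the sought basis. Membership in $\mathbb{P}_2(T)$ is immediate because each proposed function is a quadratic polynomial in the affine coordinates $\lambda_1,\lambda_2,\lambda_3$.

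The vertex conditions are read off from~\eqref{bary}. For $\varphi_i$, observe that the factor $\lambda_i$ in~\eqref{basisphi} forces $\varphi_i(\B{v}_k)=0$ for $k\neq i$, while at $\B{v}_i$ we have $\lambda_i=1$, $\lambda_j=0$ for $j\neq i$, yielding $\varphi_i(\B{v}_i)=1$; the required identity $\mathcal{L}_k^{\mathrm{enr}}(\varphi_i)=\delta_{ik}$ follows. Similarly, every $\phi_i$ in~\eqref{basisvarphi} is a multiple of the product of the two barycentric coordinates other than $\lambda_i$, so each vanishes at all three vertices, giving $\mathcal{L}_k^{\mathrm{enr}}(\phi_i)=0$.

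The edge-integral conditions are reduced to elementary one-dimensional integrals. Using property~\eqref{propstar}, the restriction of $\lambda_j$ to $\Gamma_j$ is zero, so any term containing $\lambda_j$ vanishes identically on the $j$th edge; this will make most of the nine $\mathcal{I}_k^{\mathrm{CR}}(\cdot)$ evaluations zero by inspection (e.g.\ $\mathcal{I}_1^{\mathrm{CR}}(\varphi_1)=0$ and $\mathcal{I}_2^{\mathrm{CR}}(\phi_1)=\mathcal{I}_3^{\mathrm{CR}}(\phi_1)=0$). For the remaining nontrivial cases, property~\eqref{prop2} gives a linear parametrization of each edge, for instance $\lambda_{j+1}(t\B{v}_{j+1}+(1-t)\B{v}_{j+2})=t$ and $\lambda_{j+2}(t\B{v}_{j+1}+(1-t)\B{v}_{j+2})=1-t$, so that $\mathcal{I}_k^{\mathrm{CR}}(\varphi_i)$ and $\mathcal{I}_k^{\mathrm{CR}}(\phi_i)$ reduce to a sum of moments $\int_0^1 t^a(1-t)^b\,dt$ with $a+b\le 2$. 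Only the values $\int_0^1 t\,dt = \tfrac12$, $\int_0^1 t^2\,dt=\tfrac13$, and $\int_0^1 t(1-t)\,dt=\tfrac16$ are needed; the specific coefficients $3$ in~\eqref{basisphi} and $6$ in~\eqref{basisvarphi} are precisely what makes the resulting linear combinations equal $0$ or $\delta_{ik}$ respectively.

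No step here is a genuine obstacle; the proof is pure verification. The only bookkeeping care required is the systematic use of the cyclic convention $\B{v}_{j+3}=\B{v}_j$ and the convention $\lambda_4=\lambda_1$, $\lambda_5=\lambda_2$ introduced earlier, which allows all nine integrals to be written as cyclic permutations of a single computation. Consequently it suffices to carry out the verification in detail for, say, $\varphi_1$ and $\phi_1$, and invoke the symmetry of the construction in the indices $1,2,3$ to obtain the remaining cases.
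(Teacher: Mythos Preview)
Your proposal is correct and complete: the direct verification of the $6\times 6$ duality conditions, together with the dimension count, is the standard way to establish such a basis, and your reduction of the edge integrals via~\eqref{prop2} and~\eqref{propstar} to the elementary moments $\int_0^1 t\,dt$, $\int_0^1 t^2\,dt$, $\int_0^1 t(1-t)\,dt$ is exactly right. Note, however, that the paper does not actually supply its own proof of this theorem---it is stated without proof, as the basis of the $AF3$ element is taken as known from the reference~\cite{DellAccio:2022:AUE}. Your argument therefore fills in a gap the paper deliberately leaves to the literature, and there is nothing to compare it against within the paper itself.
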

\begin{remark}\label{remmi}
    We remark that any $p\in \mathbb{P}_2(T)$ can be expressed as
    \begin{equation}\label{proveqnew}
        p=\sum_{k=1}^3  \mathcal{L}^{\mathrm{enr}}_k(p)\varphi_k+\sum_{k=1}^3  \mathcal{I}^{\mathrm{CR}}_k(p)\phi_k.
    \end{equation}
\end{remark}

\begin{remark}
        We observe that, as a consequence of~\eqref{prop2} and~\eqref{propstar}, we obtain
 \begin{equation}\label{propimp2}
     \lambda_i(\B m_j)=\frac{1}{2}(1-\delta_{ij}), \qquad i,j=1,2,3.
 \end{equation} 
\end{remark}

The next lemma plays a crucial role in deriving the main result of this section. To aid in this effort, we introduce the following notations
\begin{equation}\label{ut}
     \gamma_{\alpha} =  B(\alpha+2,\alpha+1),   \quad 
   h_{\alpha}=-\frac{5\alpha+6}{4(2\alpha+3)},\quad 
   K_{\alpha}=-\frac{\alpha}{2\alpha+3},
\end{equation}
 where 
\begin{equation}\label{beta}
    B(z_1,z_2)=\int_{0}^{1}u^{z_1-1}(1-u)^{z_2-1} du, \qquad z_1,z_2>-1,
\end{equation}
is the classical Euler beta function~\cite{Abramowitz:1948:HOM}. This function is symmetric, i.e. 
\begin{equation}\label{simmbeta}
    B(z_1,z_2)=B(z_2,z_1), \qquad z_1,z_2>-1,
\end{equation}
and satisfies
\begin{equation}\label{propbetafun}
B(z_1+1,z_2)=\frac{z_1}{z_1+z_2}B(z_1,z_2),  \qquad z_1,z_2>-1.  
\end{equation}

\begin{lemma}\label{lem1}
  Let $p \in \mathbb{P}_2(T)$ such that $ \mathcal{I}_j^{\mathrm{CR}}(p)=0$, $j=1,2,3$. Then, the values of $p$ at the vertices of $T$ satisfy the following linear system \begin{equation}\label{dnnexprima}
  \gamma_{\alpha}  \begin{bmatrix}
 K_{\alpha} &  h_{\alpha} & h_{\alpha}\\
h_{\alpha} & K_{\alpha} & h_{\alpha} \\
h_{\alpha} &h_{\alpha} &  K_{\alpha}
\end{bmatrix}
 \begin{bmatrix}
p(\B v_1)\\
p(\B v_2)\\
p(\B v_3)
\end{bmatrix}
= \begin{bmatrix}
\mathcal{F}_{1,\alpha}^{{\mathrm{enr}}}(p)\\
\mathcal{F}_{2,\alpha}^{{\mathrm{enr}}}(p)\\
\mathcal{F}_{3,\alpha}^{{\mathrm{enr}}}(p)
\end{bmatrix}.
\end{equation}
  
\end{lemma}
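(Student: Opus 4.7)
The plan is to combine Theorem~\ref{th2} with the hypothesis $\mathcal{I}_j^{\mathrm{CR}}(p)=0$ to recover the vertex values of $p$ from the enriched functionals. By Remark~\ref{remmi}, every $p\in\mathbb{P}_2(T)$ admits the representation
\[
p=\sum_{k=1}^3 p(\B{v}_k)\varphi_k+\sum_{k=1}^3 \mathcal{I}_k^{\mathrm{CR}}(p)\phi_k,
\]
and under our assumption the second sum vanishes. Applying $\mathcal{F}_{j,\alpha}^{\mathrm{enr}}$ to both sides, the claim~\eqref{dnnexprima} reduces to establishing
\[
\mathcal{F}_{j,\alpha}^{\mathrm{enr}}(\varphi_j)=\gamma_\alpha K_\alpha,\qquad \mathcal{F}_{j,\alpha}^{\mathrm{enr}}(\varphi_k)=\gamma_\alpha h_\alpha\quad (k\neq j).
\]

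To compute these, I would first exploit $\lambda_1+\lambda_2+\lambda_3\equiv 1$ to collapse the expressions in~\eqref{basisphi} to the uniform shape
\[
\varphi_k=\lambda_k(3\lambda_k-2)=3\lambda_k^2-2\lambda_k.
\]
Next, I would parametrize the segment underlying $\mathcal{F}_{j,\alpha}^{\mathrm{enr}}$ as $\B{x}(t)=t\B{v}_j+(1-t)\B{m}_j$. By the affine property~\eqref{prop2} together with~\eqref{propimp2}, the pullback of $\lambda_k$ along this segment equals $t$ for $k=j$ and $(1-t)/2$ for $k\neq j$. Substituting, I obtain the closed forms
\[
\varphi_j\bigl(\B{x}(t)\bigr)=3t^2-2t,\qquad \varphi_k\bigl(\B{x}(t)\bigr)=-\tfrac{1}{4}(1-t)(1+3t),\quad k\neq j,
\]
so that each $\mathcal{F}_{j,\alpha}^{\mathrm{enr}}(\varphi_k)$ becomes a linear combination of Euler beta integrals.

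The remaining work is then purely algebraic. One computes
\[
\mathcal{F}_{j,\alpha}^{\mathrm{enr}}(\varphi_j)=3B(\alpha+3,\alpha+1)-2B(\alpha+2,\alpha+1),
\]
and, for $k\neq j$,
\[
\mathcal{F}_{j,\alpha}^{\mathrm{enr}}(\varphi_k)=-\tfrac{1}{4}\bigl[B(\alpha+1,\alpha+2)+3B(\alpha+2,\alpha+2)\bigr].
\]
Applying the symmetry~\eqref{simmbeta} and the recursion~\eqref{propbetafun} rewrites each term as a rational multiple of $\gamma_\alpha=B(\alpha+2,\alpha+1)$, and a short simplification yields exactly $\gamma_\alpha K_\alpha$ and $\gamma_\alpha h_\alpha$ with the constants defined in~\eqref{ut}. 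The only delicate point is this beta-function bookkeeping, but no new ideas are required once the parametrization above has been set in place.
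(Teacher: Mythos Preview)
Your proposal is correct and follows essentially the same route as the paper: represent $p$ in the $AF3$ basis via Remark~\ref{remmi}, drop the $\phi_k$ terms by hypothesis, apply $\mathcal{F}_{j,\alpha}^{\mathrm{enr}}$, and reduce to the two identities $\mathcal{F}_{j,\alpha}^{\mathrm{enr}}(\varphi_j)=\gamma_\alpha K_\alpha$ and $\mathcal{F}_{j,\alpha}^{\mathrm{enr}}(\varphi_k)=\gamma_\alpha h_\alpha$ for $k\neq j$. The paper simply declares these follow from ``straightforward calculations'' using~\eqref{basisphi},~\eqref{prop2},~\eqref{propimp2},~\eqref{simmbeta},~\eqref{propbetafun}; you have spelled those calculations out explicitly (including the convenient rewriting $\varphi_k=3\lambda_k^2-2\lambda_k$), and your beta-function bookkeeping checks out.
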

\begin{proof}
 Let $p \in \mathbb{P}_2(T)$ such that
 \begin{equation*}
\mathcal{I}_j^{\mathrm{CR}}(p)=0, \qquad j=1,2,3.
 \end{equation*}
Referring to~\eqref{proveqnew} and~\eqref{linfunAF3}, we can express $p$ as follows 
 \begin{equation}\label{eqpb11}
     p=\sum_{k=1}^{3} \mathcal{L}^{\mathrm{enr}}_k(p)\varphi_k+\sum_{k=1}^3 \mathcal{I}^{\mathrm{CR}}_k(p) \phi_k= \sum_{k=1}^{3} p(\B v_k)\varphi_k.
 \end{equation}
Applying the functional $\mathcal{F}_{j,\alpha}^{\mathrm{enr}}$, $j=1,2,3$, to both sides of~\eqref{eqpb11}, we obtain 
\begin{equation}\label{eqpb11111111}
\mathcal{F}_{j,\alpha}^{\mathrm{enr}}(p) = \sum_{k=1}^{3} p(\B{v}_k)\mathcal{F}_{j,\alpha}^{\mathrm{enr}}(\varphi_k), \qquad j=1,2,3.
\end{equation}
Using~\eqref{basisphi} in combination with~\eqref{prop2} and~\eqref{propimp2}, and leveraging the properties of the Euler beta function~\eqref{simmbeta} and~\eqref{propbetafun}, straightforward calculations lead to the following results
\begin{eqnarray}
\label{eqimp}
\mathcal{F}_{j,\alpha}^{\mathrm{enr}}(\varphi_j) &=& K_{\alpha}\gamma_{\alpha}, \qquad j=1,2,3,
\\
\mathcal{F}_{j,\alpha}^{\mathrm{enr}}(\varphi_k) &=& h_{\alpha}\gamma_{\alpha}, \qquad j,k=1,2,3, \quad j\neq k.
\label{eqimp2}
\end{eqnarray}
By substituting~\eqref{eqimp} and~\eqref{eqimp2} into~\eqref{eqpb11111111}, the thesis follows.
\end{proof}

\begin{lemma}\label{lem2}
  Let $\alpha > -1$ be a fixed real number different from $-\frac{6}{7}$ and let $p\in \mathbb{P}_2(T)$ such that $\mathcal{I}_j^{\mathrm{CR}}(p)=0$, $j=1,2,3$. Then $p$ vanishes at the vertices of the triangle $T$ if and only if
\begin{equation}\label{dim}
\mathcal{F}_{j,\alpha}^{{\mathrm{enr}}}(p)= 0, \quad j=1,2,3.
\end{equation}
 \end{lemma}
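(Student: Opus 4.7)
The plan is to recognize the whole statement as an invertibility question about the $3\times 3$ matrix produced by Lemma~\ref{lem1}. Indeed, under the hypothesis $\mathcal{I}_j^{\mathrm{CR}}(p)=0$ for $j=1,2,3$, that lemma shows that the vector of enriched functional values $(\mathcal{F}_{1,\alpha}^{\mathrm{enr}}(p),\mathcal{F}_{2,\alpha}^{\mathrm{enr}}(p),\mathcal{F}_{3,\alpha}^{\mathrm{enr}}(p))^\top$ is obtained from the vector of vertex values $(p(\B v_1),p(\B v_2),p(\B v_3))^\top$ by multiplication by the fixed matrix
$$
M_\alpha = \gamma_\alpha\begin{bmatrix} K_\alpha & h_\alpha & h_\alpha \\ h_\alpha & K_\alpha & h_\alpha \\ h_\alpha & h_\alpha & K_\alpha \end{bmatrix}.
$$
The desired biconditional is thus equivalent to showing $\det M_\alpha \neq 0$, and both directions will then follow in one stroke.

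To compute $\det M_\alpha$ efficiently I would exploit its symmetric pattern structure: writing $M_\alpha = \gamma_\alpha\bigl[(K_\alpha-h_\alpha)I + h_\alpha J\bigr]$ with $J$ the $3\times 3$ all-ones matrix, the eigenvalues are $\gamma_\alpha(K_\alpha + 2h_\alpha)$, with eigenvector $(1,1,1)^\top$, and $\gamma_\alpha(K_\alpha - h_\alpha)$, with multiplicity two. This immediately gives
$$
\det M_\alpha = \gamma_\alpha^3\,(K_\alpha+2h_\alpha)(K_\alpha-h_\alpha)^2.
$$
From here the remaining work is purely arithmetic: substitute the formulas from~\eqref{ut} and identify the zeros in $\alpha$. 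I expect $K_\alpha - h_\alpha$ to simplify to a rational function whose numerator vanishes only at $\alpha = -6$, safely outside $(-1,+\infty)$, while $K_\alpha + 2h_\alpha$ should reduce to a nonzero multiple of $-(7\alpha+6)$, vanishing precisely at the excluded value $\alpha = -6/7$. Since $\gamma_\alpha = B(\alpha+2,\alpha+1)>0$ for $\alpha>-1$ and the denominator $2\alpha+3$ stays strictly positive on that range, no further degeneracy can occur.

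The main obstacle I anticipate is really just the bookkeeping: combining the two fractions in $K_\alpha - h_\alpha$ and $K_\alpha + 2h_\alpha$ carefully with correct signs, and confirming that $2\alpha+3$ is bounded away from zero throughout $(-1,+\infty)$. Once these simplifications are in hand, $M_\alpha$ is nonsingular for every admissible $\alpha$, and the linear system~\eqref{dnnexprima} delivers the equivalence between $p(\B v_i)=0$ for $i=1,2,3$ and~\eqref{dim}.
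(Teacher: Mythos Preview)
Your proposal is correct and follows essentially the same approach as the paper: both reduce the biconditional to the nonsingularity of the matrix $M_\alpha$ from Lemma~\ref{lem1} and then verify that its determinant is nonzero for $\alpha>-1$, $\alpha\neq -\tfrac{6}{7}$. Your eigenvalue decomposition $M_\alpha=\gamma_\alpha[(K_\alpha-h_\alpha)I+h_\alpha J]$ is a tidy way to reach the same determinant formula $\Delta_\alpha=-\gamma_\alpha^3\,(6+\alpha)^2(6+7\alpha)/\bigl(32(3+2\alpha)^3\bigr)$ that the paper states directly.
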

 \begin{proof} Let $p\in \mathbb{P}_2(T)$ such that 
   \begin{equation*}
  \mathcal{I}_j^{\mathrm{CR}}(p) = 0, \qquad j=1,2,3.
\end{equation*}
By leveraging~\eqref{dnnexprima} of Lemma~\ref{lem1}, it follows that, if $p$ vanishes at the vertices of the triangle $T$, then condition~\eqref{dim} is satisfied. It remains to prove the reverse implication. Then we assume that $p$ satisfies condition~\eqref{dim}. In this hypothesis, the linear system~\eqref{dnnexprima} becomes
 \begin{equation*}
  \gamma_{\alpha}  \begin{bmatrix}
 K_{\alpha} &  h_{\alpha} & h_{\alpha}\\
h_{\alpha} & K_{\alpha} & h_{\alpha} \\
h_{\alpha} &h_{\alpha} &  K_{\alpha}
\end{bmatrix}
 \begin{bmatrix}
p(\B v_1)\\
p(\B v_2)\\
p(\B v_3)
\end{bmatrix}
= \begin{bmatrix}
 0\\
 0\\
0
\end{bmatrix},
 \end{equation*}
where $\gamma_{\alpha},h_{\alpha}, K_{\alpha}$ are defined in~\eqref{ut}.
The required result follows since the determinant of the matrix associated to the above linear system is 
 \begin{equation*}
 \Delta_{\alpha}=- \gamma_{\alpha}^3\frac{(6 + \alpha)^2 (6 + 7 \alpha)}{32(3 + 2 \alpha)^3},  
 \end{equation*}
 which is different from zero for any $\alpha>-1$ and $\alpha\neq -\frac{6}{7}$. 
 \end{proof}\\

 \begin{theorem}\label{th1} 
  For any real number $\alpha > -1$, and different from $-\frac{6}{7}$, the triple $ \mathcal{C}_{\alpha}$ is a finite element.
\end{theorem}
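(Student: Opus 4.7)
The plan is to verify unisolvence directly: since $\dim \mathbb{P}_2(T) = 6$ equals $|\Sigma_{T,\alpha}^{\mathrm{enr}}| = 6$, it suffices to show that the only $p \in \mathbb{P}_2(T)$ annihilating all six functionals is the zero polynomial.

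So I would start by fixing $p \in \mathbb{P}_2(T)$ with $\mathcal{I}_j^{\mathrm{CR}}(p) = 0$ and $\mathcal{F}_{j,\alpha}^{\mathrm{enr}}(p) = 0$ for $j=1,2,3$. The first set of conditions places $p$ exactly in the hypothesis of Lemma~\ref{lem2}, and the second set matches the condition~\eqref{dim}. Lemma~\ref{lem2} (using that $\alpha > -1$ and $\alpha \ne -\tfrac{6}{7}$) then immediately yields $p(\mathbf{v}_k) = 0$ for $k=1,2,3$.

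At this point I would invoke the representation formula~\eqref{proveqnew} from Remark~\ref{remmi}, namely
\begin{equation*}
    p = \sum_{k=1}^3 \mathcal{L}_k^{\mathrm{enr}}(p)\,\varphi_k + \sum_{k=1}^3 \mathcal{I}_k^{\mathrm{CR}}(p)\,\phi_k.
\end{equation*}
By the definition~\eqref{linfunAF3}, the coefficients $\mathcal{L}_k^{\mathrm{enr}}(p) = p(\mathbf{v}_k)$ all vanish by Lemma~\ref{lem2}, while the coefficients $\mathcal{I}_k^{\mathrm{CR}}(p)$ vanish by assumption. Hence $p \equiv 0$, which proves unisolvence and therefore that $\mathcal{C}_\alpha$ is a finite element.

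The proof is essentially a bookkeeping argument: all the analytical content, in particular the nonvanishing of the determinant $\Delta_\alpha$ on the admissible parameter range, has already been absorbed into Lemma~\ref{lem1} and Lemma~\ref{lem2}. I do not expect any genuine obstacle here; the only thing to double-check is that the dimension count is indeed $6 = 6$ and that the AF3 expansion is valid for \emph{every} $p \in \mathbb{P}_2(T)$, so that decoupling the ``vertex values'' part and the ``edge integrals'' part of $p$ via Remark~\ref{remmi} is legitimate without any extra hypothesis on $p$.
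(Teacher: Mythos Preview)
Your proof is correct and follows essentially the same approach as the paper: both reduce to Lemma~\ref{lem2} to obtain $p(\B v_k)=0$, and then conclude $p\equiv 0$. The only cosmetic difference is that the paper phrases the final step as invoking that $AF3$ is a finite element, whereas you spell this out via the expansion~\eqref{proveqnew}, which amounts to the same thing.
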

\begin{proof}
Let $p\in \mathbb{P}_2(T)$ such that
\begin{eqnarray}
  \mathcal{I}_j^{\mathrm{CR}}(p) &=& 0, \qquad j=1,2,3, \label{ho1}\\
  \mathcal{F}_{j,\alpha}^{\mathrm{enr}}(p) &=& 0, \qquad j=1,2,3. \label{ho21}
\end{eqnarray}
We must prove that $p=0$. 
By Lemma~\ref{lem2}, we deduce that conditions~\eqref{ho1} and~\eqref{ho21} are equivalent to
 \begin{eqnarray*}
  \mathcal{I}_j^{\mathrm{CR}}(p) &=& 0, \qquad j=1,2,3, \\
 \mathcal{L}^{\mathrm{enr}}_j(p) &=& 0, \qquad j=1,2,3, 
\end{eqnarray*} 
where $\mathcal{L}^{\mathrm{enr}}_j$, $j=1,2,3,$ is defined in~\eqref{linfunAF3}. 
The desired result can be directly inferred from the fact that the triple $AF3$, defined in~\eqref{AF3}, is a finite element, as established in~\cite[Theorem 2.6]{DellAccio:2022:AUE}.
\end{proof}

As a direct consequence of Theorem~\ref{th1}, we can establish the existence of a basis
\begin{equation*}
    \mathcal{B}_{\mathcal{C}_{\alpha}}=\{\psi_i, \zeta_i\, :\, i=1,2,3\}
\end{equation*}
of $\mathbb{P}_2(T)$ satisfying 
the following conditions~\cite[Ch. 2]{Davis:1975:IAA}
\begin{eqnarray}
\label{propvarphi}
&& \mathcal{I}_j^{\mathrm{CR}}(\psi_i) = \delta_{ij}, \quad \mathcal{F}^{\mathrm{enr}}_{j,\alpha}(\psi_i) = 0, \qquad i,j=1,2,3, \\ \label{propphi}
&&\mathcal{I}_j^{\mathrm{CR}}(\zeta_i) = 0, \quad \mathcal{F}^{\mathrm{enr}}_{j,\alpha}(\zeta_i) = \delta_{ij}, \qquad i,j=1,2,3.
\end{eqnarray}
These functions, known as the basis functions of $\mathbb{P}_2(T)$ associated to the enriched finite element $\mathcal{C}_{\alpha}$, are explicitly expressed in the following theorem.
\begin{theorem}\label{th2allalf} 
Let $\alpha>-1$ be a fixed real number different from $-\frac{6}{7}$. The basis functions $\psi_i,\zeta_i$,  $i=1,2,3,$ of $\mathbb{P}_2(T)$ associated to the finite element $\mathcal{C}_{\alpha}$ have the following expressions
\begin{equation}
    \psi_i=c_{\alpha} \varphi_i +d_{\alpha}\sum_{\substack{k=1 \\ k \neq i}}^{3} \varphi_k + \phi_i,  \qquad
    \zeta_i=\frac{1}{\gamma_{\alpha} L_{\alpha}}\left((K_{\alpha}+h_{\alpha})\varphi_i-h_{\alpha} \sum_{\substack{k=1 \\ k \neq i}}^{3} \varphi_k \right), \qquad i=1,2,3,
\end{equation}
where
\begin{equation}\label{not}
c_{\alpha}=\frac{3 (11 \alpha^2  + 20 \alpha +12 )}{(\alpha+6) ( 7 \alpha+6)}, \qquad d_{\alpha}=\frac{3(- 3 \alpha^2  + 8 \alpha +12 ) }{(\alpha+ 6 ) (7 \alpha+6 )}, \qquad 
    L_{\alpha}=-\frac{(\alpha+ 6 ) (7 \alpha+6 )}{8 (2 \alpha+3)^2},
\end{equation}
and $\gamma_{\alpha},h_{\alpha},K_{\alpha},\varphi_i,\phi_i$ are defined in~\eqref{ut}~\eqref{basisphi} and~\eqref{basisvarphi}, respectively. 
      \end{theorem}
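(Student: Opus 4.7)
The plan is to use Remark~\ref{remmi} to expand both $\psi_i$ and $\zeta_i$ in the $AF3$ basis $\{\varphi_k,\phi_k\}$, and then invoke Lemma~\ref{lem1} (after a small shift in the case of $\psi_i$) to reduce the problem to two $3\times 3$ linear systems sharing the same coefficient matrix
\begin{equation*}
A = (K_{\alpha}-h_{\alpha})\, I + h_{\alpha}\, J,
\end{equation*}
where $J$ is the $3\times 3$ all-ones matrix. Since matrices of the form $aI+bJ$ invert in closed form, each unknown vertex value will be read off directly, and the stated expressions will follow after an elementary beta function calculation.

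First, I would combine Remark~\ref{remmi} with the identity $\mathcal{L}_j^{\mathrm{enr}}(p)=p(\B{v}_j)$ and the defining conditions~\eqref{propvarphi}--\eqref{propphi} to obtain
\begin{equation*}
\psi_i = \sum_{k=1}^{3} \psi_i(\B{v}_k)\,\varphi_k + \phi_i, \qquad \zeta_i = \sum_{k=1}^{3} \zeta_i(\B{v}_k)\,\varphi_k,
\end{equation*}
so the whole task reduces to determining the vertex values. For $\zeta_i$ the hypothesis $\mathcal{I}_j^{\mathrm{CR}}(\zeta_i)=0$ is automatic, so Lemma~\ref{lem1} applies directly with right-hand side $\mathcal{F}_{j,\alpha}^{\mathrm{enr}}(\zeta_i)=\delta_{ij}$. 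Verifying $L_\alpha = (K_\alpha-h_\alpha)(K_\alpha+2h_\alpha)$ from~\eqref{ut} and~\eqref{not} gives $A^{-1}=L_\alpha^{-1}\bigl[(K_\alpha+2h_\alpha)I - h_\alpha J\bigr]$, and reading off the $i$-th column of $A^{-1}/\gamma_\alpha$ immediately recovers the formula for $\zeta_i$.

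To handle $\psi_i$, I would introduce the shift $q_i := \psi_i - \phi_i$. Because $\phi_i$ is the $AF3$ basis function dual to $\mathcal{I}_i^{\mathrm{CR}}$, one has $\mathcal{I}_j^{\mathrm{CR}}(q_i)=\delta_{ij}-\delta_{ij}=0$, and $q_i(\B{v}_k)=\psi_i(\B{v}_k)$ since $\phi_i=6\lambda_{i+1}\lambda_{i+2}$ vanishes at every vertex. Applying Lemma~\ref{lem1} to $q_i$ then yields the same matrix $A$ on the left and the right-hand side $-\mathcal{F}_{j,\alpha}^{\mathrm{enr}}(\phi_i)$. Evaluating $\phi_i$ along the segment from $\B{v}_j$ to $\B{m}_j$ via~\eqref{propimp2} collapses each integral to a single Jacobi moment, giving $\mathcal{F}_{j,\alpha}^{\mathrm{enr}}(\phi_j)=\tfrac{3}{2}B(\alpha+1,\alpha+3)$ and $\mathcal{F}_{j,\alpha}^{\mathrm{enr}}(\phi_k)=3B(\alpha+2,\alpha+2)$ for $j\neq k$.

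The hard part will be the ensuing algebraic simplification: applying $A^{-1}/\gamma_\alpha$ to this right-hand side and using the beta recursion~\eqref{propbetafun} together with the symmetry~\eqref{simmbeta} to rewrite every entry as an explicit rational multiple of $\gamma_\alpha$, and then checking that the resulting two distinct vertex values of $\psi_i$ collapse precisely to $c_\alpha$ at $\B{v}_i$ and $d_\alpha$ at the remaining vertices. The fact that the determinantal factor $(\alpha+6)(7\alpha+6)$ from Lemma~\ref{lem2} is exactly the common denominator appearing in~\eqref{not} strongly suggests that all needed cancellations go through, but executing these simplifications by hand is the main computational obstacle.
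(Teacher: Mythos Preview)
Your proposal is correct and follows essentially the same approach as the paper: expand $\psi_i$ and $\zeta_i$ in the $AF3$ basis via Remark~\ref{remmi}, reduce to the $3\times3$ system of Lemma~\ref{lem1} (the paper does this for $\psi_i$ by applying $\mathcal{F}_{j,\alpha}^{\mathrm{enr}}$ directly to the expansion rather than introducing $q_i$, but the resulting system is identical), and invert using the same explicit inverse matrix with diagonal $K_\alpha+h_\alpha$ and off-diagonal $-h_\alpha$. Your beta-function values for $\mathcal{F}_{j,\alpha}^{\mathrm{enr}}(\phi_i)$ and your identity $L_\alpha=(K_\alpha-h_\alpha)(K_\alpha+2h_\alpha)$ agree with the paper's computations, so the remaining algebra will indeed collapse to $c_\alpha$ and $d_\alpha$ as claimed.
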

\begin{proof}
   Without loss on generality, we prove the theorem for $i=1$. 

Since $\psi_1\in\mathbb{P}_2(T)$, according to~\eqref{proveqnew}, it can be expressed as
\begin{equation*}
    \psi_1=\sum_{k=1}^3 \mathcal{L}^{\mathrm{enr}}_k(\psi_1) \varphi_k+\sum_{k=1}^3 \mathcal{I}^{CR}_k(\psi_1) \phi_k.
\end{equation*}
Using~\eqref{propvarphi} and~\eqref{linfunAF3}, we obtain
\begin{equation} \label{djeie}
    \psi_1=\sum_{k=1}^3 \psi_1(\B v_k) \varphi_k+\phi_1.
\end{equation}
Now, let us compute the vector
\begin{equation*}
    \left[\psi_1(\B v_1),\psi_1(\B v_2),\psi_1(\B v_3)\right]^T. 
\end{equation*}
For this purpose, we apply the operators $\mathcal{F}^{\mathrm{enr}}_{j,\alpha}$, $j=1,2,3$, to both sides of~\eqref{djeie}, leveraging~\eqref{propvarphi}, resulting in the following 
    \begin{eqnarray*}
0  &=& \psi_1(\B v_1) \mathcal{F}_{1,\alpha}^{{\mathrm{enr}}}(\varphi_1) + \psi_1(\B v_2)\mathcal{F}_{1,\alpha}^{{\mathrm{enr}}}(\varphi_2)+  \psi_1(\B v_3)\mathcal{F}_{1,\alpha}^{{\mathrm{enr}}}(\varphi_3)+\mathcal{F}_{1,\alpha}^{{\mathrm{enr}}}(\phi_1) \\
      0&=&  \psi_1(\B v_1)  \mathcal{F}_{2,\alpha}^{{\mathrm{enr}}}(\varphi_1)+\psi_1(\B v_2)\mathcal{F}_{2,\alpha}^{{\mathrm{enr}}}(\varphi_2)+ \psi_1(\B v_3) \mathcal{F}_{2,\alpha}^{{\mathrm{enr}}}(\varphi_3)+\mathcal{F}_{2,\alpha}^{{\mathrm{enr}}}(\phi_1)  \\
       0 &=&  \psi_1(\B v_1) \mathcal{F}_{3,\alpha}^{{\mathrm{enr}}}(\varphi_1) + \psi_1(\B v_2) \mathcal{F}_{3,\alpha}^{{\mathrm{enr}}}(\varphi_2) +\psi_1(\B v_3)\mathcal{F}_{3,\alpha}^{{\mathrm{enr}}}(\varphi_3)+\mathcal{F}_{3,\alpha}^{{\mathrm{enr}}}(\phi_1). 
  \end{eqnarray*}
Substituting~\eqref{eqimp} and~\eqref{eqimp2} into these equations, we obtain the linear system
\begin{equation}\label{mdiendekm}
  \gamma_{\alpha}  \begin{bmatrix}
 K_{\alpha} &  h_{\alpha} & h_{\alpha}\\
h_{\alpha} & K_{\alpha} & h_{\alpha} \\
h_{\alpha} &h_{\alpha} &  K_{\alpha}
\end{bmatrix}
 \begin{bmatrix}
\psi_1(\B v_1)\\
\psi_1(\B v_2)\\
\psi_1(\B v_3)
\end{bmatrix}
= -\begin{bmatrix}
\mathcal{F}_{1,\alpha}^{{\mathrm{enr}}}(\phi_1)\\
\mathcal{F}_{2,\alpha}^{{\mathrm{enr}}}(\phi_1)\\
\mathcal{F}_{3,\alpha}^{{\mathrm{enr}}}(\phi_1)
\end{bmatrix}.
\end{equation}
Thus, performing a straightforward calculation, we find
 \begin{equation*}
  -\frac{1}{\gamma_{\alpha}L_{\alpha}}  \begin{bmatrix}
 K_{\alpha}+h_{\alpha} &  -h_{\alpha} & -h_{\alpha}\\
-h_{\alpha} & K_{\alpha}+h_{\alpha} & -h_{\alpha} \\
-h_{\alpha} & -h_{\alpha} &   K_{\alpha}+h_{\alpha}
\end{bmatrix}
\begin{bmatrix}
\mathcal{F}_{1,\alpha}^{{\mathrm{enr}}}(\phi_1)\\
\mathcal{F}_{2,\alpha}^{{\mathrm{enr}}}(\phi_1)\\
\mathcal{F}_{3,\alpha}^{{\mathrm{enr}}}(\phi_1)
\end{bmatrix}
=  \begin{bmatrix}
\psi_1(\B v_1)\\
\psi_1(\B v_2)\\
\psi_1(\B v_3)
\end{bmatrix}.
\end{equation*}
By~\eqref{bary},~\eqref{prop2} and~\eqref{propimp2}, we get 
\begin{equation}\label{newnewnew}
    \mathcal{F}^{\mathrm{enr}}_{1,\alpha}(\phi_1)=\frac{3(\alpha+2)}{2(2\alpha+3)}\gamma_{\alpha}, \qquad   \mathcal{F}^{\mathrm{enr}}_{2,\alpha}(\phi_1)= \mathcal{F}^{\mathrm{enr}}_{3,\alpha}(\phi_1)=\frac{3(\alpha+1)}{2\alpha+3}\gamma_{\alpha}.
\end{equation}
Substituting~\eqref{newnewnew} into the linear system~\eqref{mdiendekm} and solving it, we obtain 
\begin{equation}\label{snjdb}
    \psi_1(\B v_1)=\frac{3 (11 \alpha^2  + 20 \alpha +12 )}{(\alpha+6) ( 7 \alpha+6)}, \qquad   \psi_1(\B v_2)=\psi_1(\B v_3)=\frac{3(- 3 \alpha^2  + 8 \alpha +12 ) }{(\alpha+ 6 ) (7 \alpha+6 )}.
\end{equation}
Combining~\eqref{snjdb} with~\eqref{djeie} and following the notations defined in~\eqref{not}, we find
\begin{equation*}
\psi_1=c_{\alpha}\varphi_1+d_{\alpha}\left(\varphi_2+\varphi_3\right) + \phi_1.
\end{equation*}

It remains to prove the expression for $\zeta_1$. Since $\zeta_1\in \mathbb{P}_2(T)$, by~\eqref{proveqnew}, it can be expressed as 
\begin{equation*}
    \zeta_1=\sum_{k=1}^3 \mathcal{L}^{\mathrm{enr}}_k(\zeta_1) \varphi_k+\sum_{k=1}^3 \mathcal{I}^{CR}_k(\zeta_1) \phi_k.
\end{equation*}
Using~\eqref{propphi} and~\eqref{linfunAF3},  we obtain
\begin{equation} \label{die}
    \zeta_1=\sum_{k=1}^3 \zeta_1(\B v_j) \varphi_k.
\end{equation}
Now, let us compute the vector
\begin{equation*}
    \left[\zeta_1(\B v_1),\zeta_1(\B v_2),\zeta_1(\B v_3)\right]^T. 
\end{equation*}
For this purpose, leveraging~\eqref{dnnexprima} of Lemma~\ref{lem1} and~\eqref{propphi}, we get
 \begin{equation*}
  \gamma_{\alpha}  \begin{bmatrix}
 K_{\alpha} &  h_{\alpha} & h_{\alpha}\\
h_{\alpha} & K_{\alpha} & h_{\alpha} \\
h_{\alpha} &h_{\alpha} &  K_{\alpha}
\end{bmatrix}
 \begin{bmatrix}
\zeta_1(\B v_1)\\
\zeta_1(\B v_2)\\
\zeta_1(\B v_3)
\end{bmatrix}
= \begin{bmatrix}
1\\
0\\
0
\end{bmatrix}.
\end{equation*}
Thus, we find
 \begin{equation*}
  \frac{1}{\gamma_{\alpha}L_{\alpha}}  \begin{bmatrix}
 K_{\alpha}+h_{\alpha} &  -h_{\alpha} & -h_{\alpha}\\
-h_{\alpha} & K_{\alpha}+h_{\alpha} & -h_{\alpha} \\
-h_{\alpha} & -h_{\alpha} &   K_{\alpha}+h_{\alpha}
\end{bmatrix}
 \begin{bmatrix}
1\\
0\\
0
\end{bmatrix}
=  \begin{bmatrix}
\zeta_1(\B v_1)\\
\zeta_1(\B v_2)\\
\zeta_1(\B v_3)
\end{bmatrix},
\end{equation*}
and then
\begin{equation*}
    \zeta_1(\B v_1)=\frac{K_{\alpha}+h_{\alpha}}{\gamma_{\alpha} L_{\alpha}}, \qquad  \zeta_1(\B v_2)=-\frac{h_{\alpha}}{\gamma_{\alpha} L_{\alpha}}, \qquad \zeta_1(\B v_3)=-\frac{h_{\alpha}}{\gamma_{\alpha} L_{\alpha}}.
\end{equation*}
Substituting these values into~\eqref{die}, we have 
\begin{equation*}
    \zeta_1=\frac{1}{\gamma_{\alpha}L_{\alpha}}\left[\left(K_{\alpha}+h_{\alpha}\right)\varphi_1-h_{\alpha} \varphi_2-h_{\alpha}\varphi_3\right].
\end{equation*}
Analogously, the theorem can be established for $i=2$ and $i=3$; consequently, the thesis follows.
\end{proof}

\begin{theorem}
 Let $\alpha>-1$ be a fixed real number different from $-\frac{6}{7}$. The approximation operator relative to the enriched finite element $\mathcal{C}_{\alpha}$
\begin{equation}
\begin{array}{rcl}
{\Pi}_{2,\mathcal{C}_\alpha}^{{\mathrm{enr}}}: C(T) &\rightarrow& \mathbb{P}_2(T)
\\
f &\mapsto& \displaystyle{\sum_{j=1}^{3}  \mathcal{I}^{\mathrm{CR}}_j(f)\psi_j+ \mathcal{F}^{\mathrm{enr}}_{j,\alpha}(f)}\zeta_j,
\end{array}
\label{pilinch9C}
\end{equation}
reproduces all polynomials of $\mathbb{P}_2(T)$ and satisfies 
\begin{eqnarray*}
\mathcal{I}^{\mathrm{CR}}_j\left({\Pi}_{2,\mathcal{C}_\alpha}^{\mathrm{enr}}[f]\right)&=&\mathcal{I}^{\mathrm{CR}}_j(f), \qquad j=1,2,3, \\ \mathcal{F}^{\mathrm{enr}}_{j,\alpha}\left({\Pi}_{2,\mathcal{C}_\alpha}^{\mathrm{enr}}[f]\right)&=&\mathcal{F}^{\mathrm{enr}}_{j,\alpha}(f), \qquad j=1,2,3.
\end{eqnarray*} 
\end{theorem}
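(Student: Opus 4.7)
The plan is to prove the two claims in the natural order, handling the degree-of-freedom preservation first, since it is immediate, and then deducing polynomial reproduction from it via unisolvence.

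First, I would verify the interpolation identities by applying the functionals $\mathcal{I}_j^{\mathrm{CR}}$ and $\mathcal{F}_{j,\alpha}^{\mathrm{enr}}$ directly to the definition~\eqref{pilinch9C}. By linearity and the biorthogonality conditions~\eqref{propvarphi}--\eqref{propphi}, every cross term collapses and only the Kronecker-$\delta$ contributions survive, yielding
\begin{equation*}
\mathcal{I}^{\mathrm{CR}}_j\bigl(\Pi_{2,\mathcal{C}_\alpha}^{\mathrm{enr}}[f]\bigr)
= \sum_{k=1}^{3}\mathcal{I}^{\mathrm{CR}}_k(f)\,\delta_{jk} + \sum_{k=1}^{3}\mathcal{F}^{\mathrm{enr}}_{k,\alpha}(f)\cdot 0
= \mathcal{I}^{\mathrm{CR}}_j(f),
\end{equation*}
and similarly $\mathcal{F}^{\mathrm{enr}}_{j,\alpha}\bigl(\Pi_{2,\mathcal{C}_\alpha}^{\mathrm{enr}}[f]\bigr)=\mathcal{F}^{\mathrm{enr}}_{j,\alpha}(f)$ for $j=1,2,3$.

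Next, I would establish polynomial reproduction. Fix any $p\in\mathbb{P}_2(T)$ and consider the difference $q = p - \Pi_{2,\mathcal{C}_\alpha}^{\mathrm{enr}}[p]$. By construction $q\in\mathbb{P}_2(T)$, and by the interpolation identities just proved together with the linearity of the functionals,
\begin{equation*}
\mathcal{I}^{\mathrm{CR}}_j(q) = 0, \qquad \mathcal{F}^{\mathrm{enr}}_{j,\alpha}(q) = 0, \qquad j=1,2,3.
\end{equation*}
Theorem~\ref{th1} asserts that for $\alpha>-1$, $\alpha\ne -\tfrac{6}{7}$, the triple $\mathcal{C}_\alpha$ is a finite element, i.e.\ $\Sigma_{T,\alpha}^{\mathrm{enr}}$ is unisolvent on $\mathbb{P}_2(T)$. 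Hence $q=0$, which gives $\Pi_{2,\mathcal{C}_\alpha}^{\mathrm{enr}}[p]=p$ and thus polynomial reproduction.

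Since both steps rely only on the already-established biorthogonality of the basis $\mathcal{B}_{\mathcal{C}_\alpha}$ and the unisolvence from Theorem~\ref{th1}, there is no genuine obstacle here; the proof is essentially a bookkeeping exercise. The only place to be careful is to invoke unisolvence with the correct exclusion $\alpha\ne -\tfrac{6}{7}$ so that the two facts needed from Theorem~\ref{th1} (existence of the dual basis and vanishing of a polynomial annihilated by all degrees of freedom) are both valid.
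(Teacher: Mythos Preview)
Your proof is correct and matches the paper's approach: the paper's proof is the single line ``The proof is a consequence of~\eqref{propvarphi} and~\eqref{propphi},'' and you have simply written out what that entails---biorthogonality gives the interpolation identities immediately, and polynomial reproduction then follows from unisolvence (equivalently, from $\mathcal{B}_{\mathcal{C}_\alpha}$ being a basis of $\mathbb{P}_2(T)$, which is how the paper frames it).
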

\begin{proof}
The proof is a consequence of~\eqref{propvarphi} and~\eqref{propphi}.
\end{proof}

\section{A second one-parameter family of enrichment of the Crouzeix--Raviart finite element}\label{sec3}

In this section, we introduce a new parametric family of quadratic enrichments of the Crouzeix--Raviart finite element. To achieve this, we introduce a real parameter $\beta > -1$. Using this parameter, we define the following enriched linear functionals 
 \begin{equation}\label{ex3a}
\mathcal{G}_{j,\beta}^{{\mathrm{enr}}}(f) = \int_{0}^{1}w_{\beta}(t)f\left(t\B v_{j}+ (1-t)
 \B m^{\star}
\right)\, dt, \qquad j=1,2,3,
\end{equation} 
 where 
 \begin{equation*}
     \B m^{\star}=\frac{\B v_1+\B v_2+\B v_3}{3}
 \end{equation*}
is the center of gravity of $T$ and $w_\beta(t)=t^\beta(1-t)^{1-\beta}$.
The one-parameter family of triples is then defined as
 \begin{equation}\label{trisps}
   \mathcal{E}_{\beta}= (T,  \mathbb{P}_2(T),\Sigma_{T,\beta}^{{\mathrm{enr}}}),
 \end{equation}
where   
 \begin{equation}\label{pasf2}
 \Sigma_{T,\beta}^{{\mathrm{enr}}}=  \left\{\mathcal{I}_j^{\mathrm{CR}},\mathcal{G}^{\mathrm{enr}}_{j,\beta}\, : \, j=1,2,3\right\},
\end{equation}
is the set of enriched degrees of freedom.  In the following, we establish that for any $\beta>-1$, the triple $ \mathcal{E}_{\beta}$, defined in~\eqref{trisps}, is a finite element. To prove this, we first introduce some preliminary results. By using the same notations of the previous section, we denote by
\begin{equation}\label{nu}
\nu_{\beta} = \frac{B(\beta+2,\beta+1)}{3(2\beta+3)}, \qquad \sigma_{\beta}=3\beta+4.
\end{equation}
\begin{remark}
        We observe that, as a consequence of~\eqref{prop2} and~\eqref{propstar}, we obtain
 \begin{equation}\label{propifmp2}
     \lambda_i(\B m^{\star})=\frac{1}{3}, \qquad i=1,2,3.
 \end{equation} 
\end{remark}
 
\begin{lemma}\label{lem12}
  Let $p \in \mathbb{P}_2(T)$ such that $ \mathcal{I}_j^{\mathrm{CR}}(p)=0$, $j=1,2,3$. Then, the values of $p$ at the vertices of $T$ satisfy the following linear system 
\begin{equation}\label{dnnexprimva}
  \nu_{\beta}  \begin{bmatrix}
2 &  -\sigma_{\beta} & -\sigma_{\beta}\\
-\sigma_{\beta} & 2 & -\sigma_{\beta} \\
-\sigma_{\beta} &-\sigma_{\beta}&  2
\end{bmatrix}
 \begin{bmatrix}
p(\B v_1)\\
p(\B v_2)\\
p(\B v_3)
\end{bmatrix}
= \begin{bmatrix}
\mathcal{G}_{1,\beta}^{{\mathrm{enr}}}(p)\\
\mathcal{G}_{2,\beta}^{{\mathrm{enr}}}(p)\\
\mathcal{G}_{3,\beta}^{{\mathrm{enr}}}(p)
\end{bmatrix}.
\end{equation}
  
\end{lemma}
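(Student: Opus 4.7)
The plan is to adapt verbatim the strategy used for Lemma~\ref{lem1}, since the only structural change is that the enrichment functional now integrates along segments joining each vertex $\B v_j$ to the centroid $\B m^{\star}$ rather than to the edge midpoints $\B m_j$. First, I would invoke the representation formula~\eqref{proveqnew}: the hypothesis $\mathcal{I}_j^{\mathrm{CR}}(p)=0$ kills the second sum, and by~\eqref{linfunAF3} the first sum reduces to
\begin{equation*}
p \;=\; \sum_{k=1}^{3} p(\B v_k)\,\varphi_k.
\end{equation*}
Applying the functional $\mathcal{G}_{j,\beta}^{\mathrm{enr}}$ to both sides for $j=1,2,3$ reduces the lemma to computing the scalars $\mathcal{G}_{j,\beta}^{\mathrm{enr}}(\varphi_k)$.

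Second, I would parameterize the integration path by $\B x(t)=t\B v_j+(1-t)\B m^{\star}$ and combine the affinity property~\eqref{prop2} with identity~\eqref{propifmp2} to obtain
\begin{equation*}
\lambda_j(\B x(t))=\frac{2t+1}{3}, \qquad \lambda_\ell(\B x(t))=\frac{1-t}{3} \quad (\ell\neq j).
\end{equation*}
Substituting into the explicit formulas~\eqref{basisphi} for $\varphi_k$ and exploiting the cancellation in $1-3\lambda_{k+1}-3\lambda_{k+2}$, the integrand $\varphi_j(\B x(t))$ collapses to $\tfrac{1}{3}(4t^2-1)$, while $\varphi_k(\B x(t))$ collapses to $\tfrac{1}{3}(t^2-1)$ for both indices $k\neq j$. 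In particular only two integrals, not six, actually need to be evaluated, and the symmetry of the matrix in~\eqref{dnnexprimva} is built in. Multiplying by the weight $w_\beta(t)$ and using the recursion~\eqref{propbetafun} to rewrite $B(\beta+3,\beta+1)$ as $\tfrac{\beta+2}{2\beta+3}B(\beta+2,\beta+1)$, I expect the two values to come out to
\begin{equation*}
\mathcal{G}_{j,\beta}^{\mathrm{enr}}(\varphi_j)=2\nu_\beta, \qquad \mathcal{G}_{j,\beta}^{\mathrm{enr}}(\varphi_k)=-\sigma_\beta\,\nu_\beta \quad (k\neq j),
\end{equation*}
with $\nu_\beta,\sigma_\beta$ as in~\eqref{nu}. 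Plugging these six values back into the relation $\mathcal{G}_{j,\beta}^{\mathrm{enr}}(p)=\sum_k p(\B v_k)\mathcal{G}_{j,\beta}^{\mathrm{enr}}(\varphi_k)$ produces exactly the system~\eqref{dnnexprimva}.

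The only real obstacle is careful bookkeeping of the numerical coefficients: one must confirm that the factor $3(2\beta+3)$ hidden in the definition of $\nu_\beta$ emerges naturally from the combination of the $\tfrac{1}{3}$ in the simplified integrand and the denominator $2\beta+3$ produced by~\eqref{propbetafun}, and that the off-diagonal computation yields $-(3\beta+4)$ rather than something similar. No new analytic ingredient is required beyond what was already used for Lemma~\ref{lem1}; the entire argument is engineered so that the answer has the block structure $2$ on the diagonal, $-\sigma_\beta$ off-diagonal, up to the common prefactor $\nu_\beta$.
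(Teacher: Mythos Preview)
Your proposal is correct and follows essentially the same route as the paper's proof: represent $p$ via~\eqref{proveqnew}, apply the functionals $\mathcal{G}_{j,\beta}^{\mathrm{enr}}$, and reduce everything to the evaluation of $\mathcal{G}_{j,\beta}^{\mathrm{enr}}(\varphi_k)$, which the paper records as~\eqref{aiutcomplet}--\eqref{aiutcomplet1} after ``straightforward calculations.'' Your outline in fact supplies more detail than the paper does---the explicit simplifications $\varphi_j(\B x(t))=\tfrac{1}{3}(4t^2-1)$ and $\varphi_k(\B x(t))=\tfrac{1}{3}(t^2-1)$ for $k\neq j$, together with the beta-function reductions---are precisely the computations the paper leaves implicit.
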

\begin{proof}
 Let $p \in \mathbb{P}_2(T)$ such that
 \begin{equation*}
     \mathcal{I}_j^{\mathrm{CR}}(p)=0, \qquad j=1,2,3.
 \end{equation*}
Referring to~\eqref{proveqnew} and~\eqref{linfunAF3}, we can express $p$ as follows 
 \begin{equation}\label{eqcpb11}
   p=\sum_{k=1}^{3} \mathcal{L}^{\mathrm{enr}}_k(p)\varphi_k+\sum_{k=1}^3 \mathcal{I}^{\mathrm{CR}}_k(p) \phi_k= \sum_{k=1}^{3} p(\B v_k)\varphi_k.
 \end{equation}
Applying the functional $\mathcal{G}_{j,\beta}^{\mathrm{enr}}$, $j=1,2,3$, to both sides of~\eqref{eqcpb11}, we obtain 
\begin{equation}\label{eqpb1111111}
\mathcal{G}_{j,\beta}^{\mathrm{enr}}(p) = \sum_{k=1}^{3} p(\B{v}_k)\mathcal{G}_{j,\beta}^{\mathrm{enr}}(\varphi_k), \qquad j=1,2,3.
\end{equation}
Using~\eqref{basisphi} in combination with~\eqref{prop2} and~\eqref{propimp2}, and leveraging the properties of the Euler beta function~\eqref{simmbeta} and~\eqref{propbetafun}, straightforward calculations lead to the following results
\begin{eqnarray}
\label{aiutcomplet}
\mathcal{G}^{\mathrm{enr}}_{j,\beta}(\varphi_j)&=&2\nu_{\beta}, \qquad j=1,2,3,
\\
 \mathcal{G}^{\mathrm{enr}}_{j,\beta}(\varphi_k)&=&-\sigma_{\beta}\nu_{\beta}, \qquad j,k=1,2,3, \quad j\neq k.
\label{aiutcomplet1}
\end{eqnarray}
By substituting~\eqref{aiutcomplet} and~\eqref{aiutcomplet1} into~\eqref{eqpb1111111}, the thesis follows.

\end{proof}

\begin{lemma}\label{lem22}
  Let $p\in \mathbb{P}_2(T)$ such that $\mathcal{I}_j^{\mathrm{CR}}(p)=0$, $j=1,2,3$. Then $p$ vanishes at the vertices of the triangle $T$ if and only if
\begin{equation}\label{dims}
\mathcal{G}_{j,\beta}^{{\mathrm{enr}}}(p)= 0, \quad j=1,2,3.
\end{equation}
 \end{lemma}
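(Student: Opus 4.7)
The plan is to mirror the structure of the proof of Lemma~\ref{lem1} and~\ref{lem2}, now with the linear system supplied by Lemma~\ref{lem12} in place of the one in Lemma~\ref{lem1}. Concretely, assuming $\mathcal{I}_j^{\mathrm{CR}}(p)=0$ for $j=1,2,3$, equation~\eqref{dnnexprimva} rewrites the vector $\bigl(\mathcal{G}^{\mathrm{enr}}_{j,\beta}(p)\bigr)_{j=1}^{3}$ as the image of $\bigl(p(\B v_j)\bigr)_{j=1}^{3}$ under the matrix $\nu_\beta M_\beta$, where
\begin{equation*}
M_\beta=\begin{bmatrix}
2 & -\sigma_\beta & -\sigma_\beta\\
-\sigma_\beta & 2 & -\sigma_\beta\\
-\sigma_\beta & -\sigma_\beta & 2
\end{bmatrix}.
\end{equation*}
The forward direction of the biconditional is then immediate: if $p$ vanishes at the three vertices, the right-hand side in~\eqref{dnnexprimva} is the zero vector, so~\eqref{dims} holds.

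For the reverse implication, I would suppose~\eqref{dims} and read~\eqref{dnnexprimva} as the homogeneous system $\nu_\beta M_\beta\,[p(\B v_1),p(\B v_2),p(\B v_3)]^{T}=\B 0$. The goal reduces to showing that $\nu_\beta M_\beta$ is nonsingular for every $\beta>-1$. Setting $s=\sigma_\beta=3\beta+4$, a direct Laplace expansion gives
\begin{equation*}
\det M_\beta = 8 - 6s^2 - 2s^3 = -2(s-1)(s+2)^2 = -54(\beta+1)(\beta+2)^2,
\end{equation*}
where I used the factorisation $s^3+3s^2-4=(s-1)(s+2)^2$ and substituted $s-1=3(\beta+1)$, $s+2=3(\beta+2)$.

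To conclude, I would observe that for $\beta>-1$ the factor $(\beta+1)$ is strictly positive and $(\beta+2)^2>0$, so $\det M_\beta\neq 0$; moreover $\nu_\beta>0$ for such $\beta$ by~\eqref{nu}, since $B(\beta+2,\beta+1)>0$ by definition~\eqref{beta} and $2\beta+3>1$. Hence $\det(\nu_\beta M_\beta)=\nu_\beta^{3}\det M_\beta\neq 0$, forcing $p(\B v_1)=p(\B v_2)=p(\B v_3)=0$. The only real obstacle here is the algebraic factorisation of the cubic in $s$; everything else is a verbatim adaptation of the argument in Lemma~\ref{lem2}. It is worth remarking that, in contrast with the first family, no additional exceptional value of the parameter arises, which is consistent with the stronger unisolvence claim for $\mathcal{E}_\beta$ announced just before~\eqref{propifmp2}.
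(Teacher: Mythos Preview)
Your proof is correct and follows essentially the same approach as the paper: invoke the linear system of Lemma~\ref{lem12}, obtain the forward implication trivially, and for the converse compute the determinant of the coefficient matrix to show it is nonsingular for all $\beta>-1$. The paper simply states the determinant as $-54\nu_\beta^3(\beta+1)(\beta+2)^2$ without displaying the intermediate factorisation via $s=\sigma_\beta$, but your more explicit algebra is a welcome addition; the only cosmetic slip is that in the forward direction you should say the \emph{left}-hand side of~\eqref{dnnexprimva} becomes zero (forcing the right-hand side to vanish), rather than the other way round.
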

 \begin{proof} 
  Let $p\in \mathbb{P}_2(T)$ such that 
   \begin{equation*}
  \mathcal{I}_j^{\mathrm{CR}}(p) = 0, \qquad j=1,2,3.
\end{equation*}
 By leveraging~\eqref{dnnexprimva} of Lemma~\ref{lem12}, it follows that, if $p$ vanishes at the vertices of the triangle $T$, then condition~\eqref{dims} is satisfied. 
 It remains to prove the reverse implication. Then we assume that $p$ satisfies condition~\eqref{dims}. In this hypothesis, the linear system~\eqref{dnnexprimva} becomes
 \begin{equation*}
  \nu_{\beta}  \begin{bmatrix}
 2 &  -\sigma_{\beta} & -\sigma_{\beta}\\
-\sigma_{\beta} & 2 & -\sigma_{\beta} \\
-\sigma_{\beta} &-\sigma_{\beta} &  2
\end{bmatrix}
 \begin{bmatrix}
p(\B v_1)\\
p(\B v_2)\\
p(\B v_3)
\end{bmatrix}
= \begin{bmatrix}
 0\\
 0\\
0
\end{bmatrix},
 \end{equation*}
where $\nu_{\beta},\sigma_{\beta}$ are defined in~\eqref{nu}.
The required result follows since the determinant of the matrix associated to the above linear system is 
\begin{equation*}
\Delta_{\beta}=-54\nu_{\beta}^3 (\beta+1)(\beta+2)^2
\end{equation*}
 which is different from zero for any $\beta>-1$.
 \end{proof}\\

 \begin{theorem}\label{th22} 
 For any real number $\beta > -1$ the triple $ \mathcal{E}_{\beta}$ is a finite element. 
\end{theorem}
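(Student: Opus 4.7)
The plan is to mirror the proof of Theorem~\ref{th1}, replacing the role of Lemma~\ref{lem2} by its analog Lemma~\ref{lem22}. Specifically, I take an arbitrary $p \in \mathbb{P}_2(T)$ satisfying the unisolvency hypothesis
\begin{eqnarray*}
\mathcal{I}_j^{\mathrm{CR}}(p) &=& 0, \qquad j=1,2,3, \\
\mathcal{G}_{j,\beta}^{\mathrm{enr}}(p) &=& 0, \qquad j=1,2,3,
\end{eqnarray*}
and aim to prove $p = 0$, which establishes that $\mathbb{P}_2(T)$ is $\Sigma_{T,\beta}^{\mathrm{enr}}$-unisolvent and hence that $\mathcal{E}_\beta$ is a finite element.

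The key step is to invoke Lemma~\ref{lem22}: since $p$ already annihilates the three Crouzeix--Raviart functionals, the vanishing of $\mathcal{G}_{j,\beta}^{\mathrm{enr}}(p)$ for $j=1,2,3$ is equivalent to $p(\B{v}_j) = 0$ for $j=1,2,3$. Recalling the definition~\eqref{linfunAF3} of $\mathcal{L}_j^{\mathrm{enr}}$, this means that $p$ now satisfies
\begin{eqnarray*}
\mathcal{I}_j^{\mathrm{CR}}(p) &=& 0, \qquad j=1,2,3, \\
\mathcal{L}_j^{\mathrm{enr}}(p) &=& 0, \qquad j=1,2,3.
\end{eqnarray*}

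At this point the conclusion $p=0$ is immediate from the fact that the triple $AF3$ defined in~\eqref{AF3} is a finite element, a result already established in~\cite[Theorem 2.6]{DellAccio:2022:AUE} and previously used to close the proof of Theorem~\ref{th1}.

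I do not anticipate a genuine obstacle: the nondegeneracy needed for Lemma~\ref{lem22} is valid for every $\beta > -1$ because the determinant $\Delta_\beta = -54\nu_\beta^3(\beta+1)(\beta+2)^2$ never vanishes on this range (in contrast with the first family, where the additional exclusion $\alpha \neq -6/7$ was required). Consequently no parameter value has to be discarded in the statement of Theorem~\ref{th22}, and the reduction to $AF3$ goes through verbatim.
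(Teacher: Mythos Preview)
Your proposal is correct and follows essentially the same approach as the paper's own proof: take $p\in\mathbb{P}_2(T)$ annihilating all six functionals, use Lemma~\ref{lem22} to replace the conditions $\mathcal{G}_{j,\beta}^{\mathrm{enr}}(p)=0$ by $\mathcal{L}_j^{\mathrm{enr}}(p)=0$, and conclude via the unisolvency of $AF3$. Your additional remark on why no value of $\beta>-1$ must be excluded is accurate and consistent with the determinant computation in Lemma~\ref{lem22}.
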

\begin{proof}
  Let $p\in \mathbb{P}_2(T)$ such that
\begin{eqnarray}
  \mathcal{I}_j^{\mathrm{CR}}(p) &=& 0, \qquad j=1,2,3, \label{ho1new}\\
  \mathcal{G}_{j,\beta}^{\mathrm{enr}}(p) &=& 0, \qquad j=1,2,3. \label{ho21new}
\end{eqnarray}
We must prove that $p=0$. 
By Lemma~\ref{lem22}, we deduce that conditions~\eqref{ho1new} and~\eqref{ho21new} are equivalent to
 \begin{eqnarray*}
  \mathcal{I}_j^{\mathrm{CR}}(p) &=& 0, \qquad j=1,2,3, \\
 \mathcal{L}^{\mathrm{enr}}_j(p) &=& 0, \qquad j=1,2,3, 
\end{eqnarray*} 
where $\mathcal{L}^{\mathrm{enr}}_j$, $j=1,2,3,$ is defined in~\eqref{linfunAF3}. 
The desired result can be directly inferred from the fact that the triple $AF3$, defined in~\eqref{AF3}, is a finite element, as established in~\cite[Theorem 2.6]{DellAccio:2022:AUE}.
\end{proof}

As a direct consequence of Theorem~\ref{th22}, we can establish the existence of a basis
\begin{equation*}
    \mathcal{B}_{\mathcal{E}_{\beta}}=\{\tau_i, \rho_i\, :\, i=1,2,3\}
\end{equation*}
of $\mathbb{P}_2(T)$ satisfying 
the following conditions~\cite[Ch. 2]{Davis:1975:IAA}
\begin{eqnarray}
\label{proptau}
&& \mathcal{I}_j^{\mathrm{CR}}(\tau_i) = \delta_{ij}, \quad \mathcal{G}^{\mathrm{enr}}_{j,\beta}(\tau_i) = 0, \qquad i,j=1,2,3, \\ \label{propro}
&&\mathcal{I}_j^{\mathrm{CR}}(\rho_i) = 0, \quad \mathcal{G}^{\mathrm{enr}}_{j,\beta}(\rho_i) = \delta_{ij}, \qquad i,j=1,2,3.
\end{eqnarray}
These functions, known as the basis functions of $\mathbb{P}_2(T)$ associated to the enriched finite element $\mathcal{E}_{\beta}$, are explicitly expressed in the following theorem.

\begin{theorem}
Let $\beta>-1$ be a fixed real number. The basis functions $\tau_i,\rho_i$,  $i=1,2,3,$ of $\mathbb{P}_2(T)$ associated to the finite element $\mathcal{E}_{\beta}$ have the following expressions
\begin{equation}
    \tau_i=r_{\beta}\varphi_i+m_{\beta}\sum_{\substack{k=1 \\ k \neq i}}^{3} \varphi_k+\phi_i,  \qquad
    \rho_i= \frac{1}{\nu_{\beta}L_{\beta}}\left(\left(\sigma_{\beta}-2\right)\varphi_i-\sigma_{\beta} \sum_{\substack{k=1 \\ k \neq i}}^{3} \varphi_k \right), \qquad i=1,2,3,
\end{equation}
where 
\begin{equation}\label{not1}
     r_{\beta}=\frac{6(7 \beta^2 + 18 \beta + 12)}{ L_{\beta}}, \qquad   m_{\beta}=\frac{6(\beta^2 + 6 \beta + 6)}{ L_{\beta}}, \qquad
  L_{\beta}=18(\beta+1)(\beta+2)
\end{equation}
and $\nu_{\beta}, \sigma_{\beta},\varphi_i,\phi_i$ are defined in~\eqref{nu}~\eqref{basisphi} and~\eqref{basisvarphi}, respectively. 
      \end{theorem}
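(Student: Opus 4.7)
The proof will follow the same template as Theorem~\ref{th2allalf}, using Lemma~\ref{lem12} in place of Lemma~\ref{lem1}. By symmetry it suffices to handle $i=1$; the cases $i=2,3$ are obtained by permuting the vertex labels. The two basis functions are constructed separately: first $\tau_1$, then $\rho_1$.

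For $\tau_1$, I start from the representation~\eqref{proveqnew} applied to $\tau_1\in\mathbb{P}_2(T)$. Using~\eqref{proptau} together with~\eqref{linfunAF3}, the Crouzeix--Raviart coefficients collapse to $\delta_{1k}$, giving $\tau_1 = \sum_{k=1}^3 \tau_1(\B v_k)\varphi_k + \phi_1$. I then apply each functional $\mathcal{G}^{\mathrm{enr}}_{j,\beta}$ to both sides; the left-hand side vanishes by~\eqref{proptau}, and substituting~\eqref{aiutcomplet}--\eqref{aiutcomplet1} yields the linear system
\begin{equation*}
\nu_\beta\begin{bmatrix}2 & -\sigma_\beta & -\sigma_\beta\\ -\sigma_\beta & 2 & -\sigma_\beta\\ -\sigma_\beta & -\sigma_\beta & 2\end{bmatrix}\begin{bmatrix}\tau_1(\B v_1)\\ \tau_1(\B v_2)\\ \tau_1(\B v_3)\end{bmatrix} = -\begin{bmatrix}\mathcal{G}^{\mathrm{enr}}_{1,\beta}(\phi_1)\\ \mathcal{G}^{\mathrm{enr}}_{2,\beta}(\phi_1)\\ \mathcal{G}^{\mathrm{enr}}_{3,\beta}(\phi_1)\end{bmatrix}.
\end{equation*}
The right-hand side entries are a direct Euler-Beta computation: from~\eqref{prop2} and~\eqref{propifmp2}, $\phi_1$ restricted to the segment from $\B v_j$ to $\B m^\star$ is a univariate quadratic in $t$, whose weighted integral against $w_\beta$ reduces to $B(\beta+\cdot,\beta+\cdot)$ and is simplified by~\eqref{simmbeta}, \eqref{propbetafun}. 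Since the matrix has the circulant form $aI+bJ$ with $a=2+\sigma_\beta=3(\beta+2)$ and $b=-\sigma_\beta$, its inverse is the one displayed inside Theorem~\ref{th2allalf} with $K_\alpha, h_\alpha$ replaced by the current entries, and its determinant equals $\Delta_\beta/\nu_\beta^3$ from Lemma~\ref{lem22}. Solving and collecting terms in $(\beta+1),(\beta+2)$ produces the vertex values, which I then plug back into the expansion $\tau_1 = r_\beta\varphi_1 + m_\beta(\varphi_2+\varphi_3)+\phi_1$, identifying $r_\beta$, $m_\beta$ and $L_\beta$ as in~\eqref{not1}.

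For $\rho_1$, the same expansion~\eqref{proveqnew} combined with~\eqref{propro} (whose first three identities are zero) collapses to $\rho_1 = \sum_{k=1}^3 \rho_1(\B v_k)\varphi_k$. Then~\eqref{dnnexprimva} of Lemma~\ref{lem12} applied to $\rho_1$ gives directly
\begin{equation*}
\nu_\beta\begin{bmatrix}2 & -\sigma_\beta & -\sigma_\beta\\ -\sigma_\beta & 2 & -\sigma_\beta\\ -\sigma_\beta & -\sigma_\beta & 2\end{bmatrix}\begin{bmatrix}\rho_1(\B v_1)\\ \rho_1(\B v_2)\\ \rho_1(\B v_3)\end{bmatrix} = \begin{bmatrix}1\\ 0\\ 0\end{bmatrix},
\end{equation*}
so the first column of the inverse matrix supplies the vertex values. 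Using $4-\sigma_\beta^2 = -(\sigma_\beta-2)(\sigma_\beta+2)$ and $\sigma_\beta+2=3(\beta+2)$, the numerators factor cleanly against the denominator $\Delta_\beta=-54\nu_\beta^3(\beta+1)(\beta+2)^2$, yielding $\rho_1(\B v_1)=(\sigma_\beta-2)/(\nu_\beta L_\beta)$ and $\rho_1(\B v_2)=\rho_1(\B v_3)=-\sigma_\beta/(\nu_\beta L_\beta)$, which substituted back give the stated formula for $\rho_1$.

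The only genuine obstacle is the symbolic bookkeeping: I must compute $\mathcal{G}^{\mathrm{enr}}_{j,\beta}(\phi_1)$ for both $j=1$ and $j\neq 1$ (these are distinct because $\phi_1=6\lambda_2\lambda_3$ vanishes at $\B v_1$ but not at $\B v_2,\B v_3$), and I must factor the resulting polynomials in $\beta$ exactly as in~\eqref{not1}. Everything else is mechanical and fully parallel to the proof of Theorem~\ref{th2allalf}.
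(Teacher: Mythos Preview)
Your proposal is correct and follows essentially the same route as the paper's own proof: expand $\tau_1$ and $\rho_1$ via~\eqref{proveqnew}, use the duality conditions~\eqref{proptau}--\eqref{propro} to reduce to the vertex values, apply $\mathcal{G}^{\mathrm{enr}}_{j,\beta}$ and invoke~\eqref{aiutcomplet}--\eqref{aiutcomplet1} (respectively Lemma~\ref{lem12}) to obtain and solve the same $3\times 3$ linear systems. Your explicit remarks on the circulant structure $aI+bJ$ and the factorization $4-\sigma_\beta^2=-(\sigma_\beta-2)(\sigma_\beta+2)$ are helpful computational observations that the paper leaves implicit, but the overall strategy is identical.
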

\begin{proof}
   Without loss on generality, we prove the theorem for $i=1$. 

Since $\tau_1\in\mathbb{P}_2(T)$, according to~\eqref{proveqnew}, it can be expressed as
\begin{equation*}
    \tau_1=\sum_{k=1}^3 \mathcal{L}^{\mathrm{enr}}_k(\tau_1) \varphi_k+\sum_{k=1}^3 \mathcal{I}^{CR}_k(\tau_1) \phi_k.
\end{equation*}
Using~\eqref{proptau} and~\eqref{linfunAF3}, we obtain
\begin{equation} \label{djeise}
    \tau_1=\sum_{k=1}^3 \tau_1(\B v_k) \varphi_k+\phi_1.
\end{equation}
Now, let us compute the vector
\begin{equation*}
    \left[\tau_1(\B v_1),\tau_1(\B v_2),\tau_1(\B v_3)\right]^T. 
\end{equation*}
For this purpose, we apply the operators $\mathcal{G}^{\mathrm{enr}}_{j,\beta}$, $j=1,2,3$, to both sides of~\eqref{djeise}, leveraging~\eqref{proptau}, resulting in the following 
    \begin{eqnarray*}
0  &=& \tau_1(\B v_1) \mathcal{G}_{1,\beta}^{{\mathrm{enr}}}(\varphi_1) + \tau_1(\B v_2)\mathcal{G}_{1,\beta}^{{\mathrm{enr}}}(\varphi_2)+  \tau_1(\B v_3)\mathcal{G}_{1,\beta}^{{\mathrm{enr}}}(\varphi_3)+\mathcal{G}_{1,\beta}^{{\mathrm{enr}}}(\phi_1) \\
     0  &=& \tau_1(\B v_1) \mathcal{G}_{2,\beta}^{{\mathrm{enr}}}(\varphi_1) + \tau_1(\B v_2)\mathcal{G}_{2,\beta}^{{\mathrm{enr}}}(\varphi_2)+  \tau_1(\B v_3)\mathcal{G}_{2,\beta}^{{\mathrm{enr}}}(\varphi_3)+\mathcal{G}_{2,\beta}^{{\mathrm{enr}}}(\phi_1)\\
       0  &=& \tau_1(\B v_1) \mathcal{G}_{3,\beta}^{{\mathrm{enr}}}(\varphi_1) + \tau_1(\B v_2)\mathcal{G}_{3,\beta}^{{\mathrm{enr}}}(\varphi_2)+  \tau_1(\B v_3)\mathcal{G}_{3,\beta}^{{\mathrm{enr}}}(\varphi_3)+\mathcal{G}_{3,\beta}^{{\mathrm{enr}}}(\phi_1). 
  \end{eqnarray*}
Substituting~\eqref{aiutcomplet} and~\eqref{aiutcomplet1} into these equations, we obtain the linear system

\begin{equation}\label{mdiendekm1}
   \nu_{\beta}  \begin{bmatrix}
2 &  -\sigma_{\beta} & -\sigma_{\beta}\\
-\sigma_{\beta} & 2 & -\sigma_{\beta} \\
-\sigma_{\beta} &-\sigma_{\beta}&  2
\end{bmatrix}
 \begin{bmatrix}
\tau_1(\B v_1)\\
\tau_1(\B v_2)\\
\tau_1(\B v_3)
\end{bmatrix}
= -\begin{bmatrix}
\mathcal{G}_{1,\beta}^{{\mathrm{enr}}}(\phi_1)\\
\mathcal{G}_{2,\beta}^{{\mathrm{enr}}}(\phi_1)\\
\mathcal{G}_{3,\beta}^{{\mathrm{enr}}}(\phi_1)
\end{bmatrix}.
\end{equation}
Thus, performing a straightforward calculation, we find
 \begin{equation*}
 - \frac{1}{\nu_{\beta}L_{\beta}}  \begin{bmatrix}
 \sigma_{\beta}-2 &  -\sigma_{\beta} & -\sigma_{\beta}\\
-\sigma_{\beta} & \sigma_{\beta}-2 & -\sigma_{\beta} \\
-\sigma_{\beta} & -\sigma_{\beta} &   \sigma_{\beta}-2
\end{bmatrix}
\begin{bmatrix}
\mathcal{G}_{1,\beta}^{{\mathrm{enr}}}(\phi_1)\\
\mathcal{G}_{2,\beta}^{{\mathrm{enr}}}(\phi_1)\\
\mathcal{G}_{3,\beta}^{{\mathrm{enr}}}(\phi_1)
\end{bmatrix}
=  \begin{bmatrix}
\tau_1(\B v_1)\\
\tau_1(\B v_2)\\
\tau_1(\B v_3)
\end{bmatrix}.
\end{equation*}
By~\eqref{bary},~\eqref{prop2} and~\eqref{propimp2}, we get 
\begin{equation}\label{newnewnew1}
    \mathcal{G}^{\mathrm{enr}}_{1,\beta}(\phi_1)=2(\beta+2) \nu_{\beta}, \qquad   \mathcal{G}^{\mathrm{enr}}_{2,\beta}(\phi_1)= \mathcal{G}^{\mathrm{enr}}_{3,\beta}(\phi_1)=2(4\beta+5)\nu_{\beta}.
\end{equation}
Substituting~\eqref{newnewnew1} into the linear system~\eqref{mdiendekm1} and solving it, we obtain 
\begin{equation}\label{snjdb1}
    \tau_1(\B v_1)=\frac{7 \beta^2 + 18 \beta + 12}{3 \beta^2 + 9 \beta + 6}, \qquad   \tau_1(\B v_2)=\tau_1(\B v_3)=\frac{\beta^2 + 6 \beta + 6 }{3 \beta^2 + 9 \beta + 6}.
\end{equation}
Combining~\eqref{snjdb1} with~\eqref{djeise} and following the notations defined in~\eqref{not1}, we find
\begin{equation*}
\tau_1=r_{\beta}\varphi_1+m_{\beta}\left(\varphi_2+\varphi_3\right) + \phi_1.
\end{equation*}

It remains to prove the expression for $\rho_1$. Since $\rho_1\in \mathbb{P}_2(T)$, by~\eqref{proveqnew}, it can be expressed as 
\begin{equation*}
    \rho_1=\sum_{k=1}^3 \mathcal{L}^{\mathrm{enr}}_k(\rho_1) \varphi_k+\sum_{k=1}^3 \mathcal{I}^{CR}_k(\rho_1) \phi_k.
\end{equation*}
Using~\eqref{propro} and~\eqref{linfunAF3},  we obtain
\begin{equation} \label{dies}
    \rho_1=\sum_{k=1}^3 \rho_1(\B v_k) \varphi_k.
\end{equation}
Now, let us compute the vector
\begin{equation*}
    \left[\rho_1(\B v_1),\rho_1(\B v_2),\rho_1(\B v_3)\right]^T. 
\end{equation*}
For this purpose, leveraging~\eqref{dnnexprimva} of Lemma~\ref{lem12} and~\eqref{propro}, we get
 \begin{equation*}
  \nu_{\beta}  \begin{bmatrix}
2 &  -\sigma_{\beta} & -\sigma_{\beta}\\
-\sigma_{\beta} & 2 & -\sigma_{\beta} \\
-\sigma_{\beta} &-\sigma_{\beta}&  2
\end{bmatrix}
 \begin{bmatrix}
\rho_1(\B v_1)\\
\rho_1(\B v_2)\\
\rho_1(\B v_3)
\end{bmatrix}
= \begin{bmatrix}
1\\
0\\
0
\end{bmatrix}.
\end{equation*}
Thus, we find
 \begin{equation*}
  \frac{1}{\nu_{\beta}L_{\beta}}  \begin{bmatrix}
 \sigma_{\beta}-2 &  -\sigma_{\beta} & -\sigma_{\beta}\\
-\sigma_{\beta} & \sigma_{\beta}-2 & -\sigma_{\beta} \\
-\sigma_{\beta} & -\sigma_{\beta} &   \sigma_{\beta}-2
\end{bmatrix}
 \begin{bmatrix}
1\\
0\\
0
\end{bmatrix}
=  \begin{bmatrix}
\rho_1(\B v_1)\\
\rho_1(\B v_2)\\
\rho_1(\B v_3)
\end{bmatrix},
\end{equation*}
and then
\begin{equation*}
    \rho_1(\B v_1)=\frac{\sigma_{\beta}-2}{\nu_{\beta} L_{\beta}}, \quad  \rho_1(\B v_2)=-\frac{\sigma_{\beta}}{\nu_{\beta}L_{\beta}}, \quad \rho_1(\B v_3)=-\frac{\sigma_{\beta}}{\nu_{\beta}L_{\beta}}.
\end{equation*}
Substituting these values into~\eqref{dies}, we have 
\begin{equation*}
    \rho_1=\frac{1}{\nu_{\beta} L_{\beta}}\left[\left(\sigma_{\beta}-2\right)\varphi_1-\sigma_{\beta} \left(\varphi_2+\varphi_3\right)\right].
\end{equation*}
Analogously, the theorem can be established for $i=2$ and $i=3$; consequently, the thesis follows.
\end{proof}

\begin{theorem}
 Let $\beta>-1$ be a fixed real number. The approximation operator relative to the enriched finite element $\mathcal{E}_{\beta}$
\begin{equation}
\begin{array}{rcl}
{\Pi}_{2,\mathcal{E}_{\beta}}^{{\mathrm{enr}}}: C(T) &\rightarrow& \mathbb{P}_2(T)
\\
f &\mapsto& \displaystyle{\sum_{j=1}^{3}  \mathcal{I}^{\mathrm{CR}}_j(f)\tau_j+ \mathcal{G}^{\mathrm{enr}}_{j,\beta}(f)}\rho_j,
\end{array}
\label{pilinch9Cv}
\end{equation}
reproduces all polynomials of $\mathbb{P}_2(T)$ and satisfies 
\begin{eqnarray*}
\mathcal{I}^{\mathrm{CR}}_j\left({\Pi}_{2,\mathcal{E}_{\beta}}^{\mathrm{enr}}[f]\right)&=&\mathcal{I}^{\mathrm{CR}}_j(f), \qquad j=1,2,3, \\ \mathcal{G}^{\mathrm{enr}}_{j,\beta}\left({\Pi}_{2,\mathcal{E}_{\beta}}^{\mathrm{enr}}[f]\right)&=&\mathcal{G}^{\mathrm{enr}}_{j,\beta}(f), \qquad j=1,2,3.
\end{eqnarray*} 
\end{theorem}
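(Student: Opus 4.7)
The plan is to recognize this statement as a direct consequence of the dual-basis relations~\eqref{proptau}--\eqref{propro}, combined with the linearity of the underlying functionals. No analytical obstacle appears; the proof is entirely a matter of bookkeeping, and the only substantive content is already packaged in the preceding theorem that produced $\{\tau_i,\rho_i\}$ and in the unisolvence result of Theorem~\ref{th22}.

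First, I would establish the two interpolation identities. Fixing $k\in\{1,2,3\}$ and applying $\mathcal{I}^{\mathrm{CR}}_k$ to the definition~\eqref{pilinch9Cv}, linearity yields
\[
\mathcal{I}^{\mathrm{CR}}_k\bigl(\Pi_{2,\mathcal{E}_{\beta}}^{\mathrm{enr}}[f]\bigr)
=\sum_{j=1}^{3}\mathcal{I}^{\mathrm{CR}}_j(f)\,\mathcal{I}^{\mathrm{CR}}_k(\tau_j)
+\sum_{j=1}^{3}\mathcal{G}^{\mathrm{enr}}_{j,\beta}(f)\,\mathcal{I}^{\mathrm{CR}}_k(\rho_j).
\]
By~\eqref{proptau} the first sum collapses to $\mathcal{I}^{\mathrm{CR}}_k(f)$, while by~\eqref{propro} the second sum vanishes. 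The companion identity is obtained by applying $\mathcal{G}^{\mathrm{enr}}_{k,\beta}$ instead and running the symmetric calculation: the $\tau$-contributions disappear by~\eqref{proptau} and the $\rho$-contributions reduce to $\mathcal{G}^{\mathrm{enr}}_{k,\beta}(f)$ by~\eqref{propro}.

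Next, I would deduce polynomial reproduction from these identities. Fix an arbitrary $p\in\mathbb{P}_2(T)$ and set $q:=p-\Pi_{2,\mathcal{E}_{\beta}}^{\mathrm{enr}}[p]\in\mathbb{P}_2(T)$. Applying the two interpolation identities just proved with $f=p$ gives
\[
\mathcal{I}^{\mathrm{CR}}_j(q)=0,\qquad \mathcal{G}^{\mathrm{enr}}_{j,\beta}(q)=0,\qquad j=1,2,3,
\]
so $q$ is annihilated by every element of $\Sigma_{T,\beta}^{\mathrm{enr}}$. The unisolvence proved in Theorem~\ref{th22} then forces $q=0$, hence $\Pi_{2,\mathcal{E}_{\beta}}^{\mathrm{enr}}[p]=p$.

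The only point worth flagging is the logical ordering: proving the interpolation identities first for an arbitrary $f\in C(T)$ and then deducing polynomial reproduction via the unisolvence of $\mathcal{E}_{\beta}$ keeps the two ingredients cleanly separated. Given the preparatory dual-basis construction and Theorem~\ref{th22}, there is no genuine difficulty to overcome.
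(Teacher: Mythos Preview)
Your proposal is correct and follows exactly the route the paper takes: the paper's own proof is the single sentence ``The proof is a consequence of~\eqref{proptau} and~\eqref{propro},'' and what you have written is simply the natural unpacking of that sentence, using the dual-basis relations for the interpolation identities and unisolvence (Theorem~\ref{th22}) for polynomial reproduction.
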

\begin{proof}
The proof is a consequence of~\eqref{proptau} and~\eqref{propro}.
\end{proof}

\section{Numerical experiments}\label{sec4}
In this Section,  we test the effectiveness of the proposed enrichment strategies through multiple examples. We consider the following test functions used in~\cite{Renka1999AAT}
\begin{eqnarray*}
    &&f_1(x,y)=\frac{\sin(2\pi x)\cos(2\pi y)}{2}, \qquad f_2(x,y)=\frac{1}{x^2+y^2+8},\\
&&  f_3(x,y)= \frac{e^{-81/16((x-0.5)^2+(y-0.5)^2)}}{3}, \\ && f_4(x,y)=\frac{\sqrt{64-81((x-0.5)^2+(y-0.5)^2)}}{9}-0.5,\\
&& f_5(x,y)=e^{x+y}, \qquad f_6(x,y)=\frac{1}{x^2+y^2+25},
\end{eqnarray*}
and six different Delaunay triangulations (see Figure~\ref{Fig:regulatri} and Figure~\ref{Fig: DelaunayTrian}). These latter are obtained through the Shewchuk's triangle program~\cite{Shewchuk:1996:TEA}. 

Numerical results are reported in Tables~\ref{TabreTri1}-\ref{TabreTri6}. In these tables, we compare the error in $L^1$-norm produced by the standard Crouzeix--Raviart finite element ($E^{\mathrm{CR}}_1$) with that produced by the enriched approximation operators ${\Pi}_{2,\mathcal{C}_\alpha}^{{\mathrm{enr}}}$, defined in~\eqref{pilinch9C} with $\alpha=1$ ($E^{\mathrm{enr}}_{\mathcal{C}_{\alpha},1}$), and ${\Pi}_{2,\mathcal{E}_{\beta}}^{{\mathrm{enr}}}$, defined in~\eqref{pilinch9Cv} with $\beta=1$ ($E^{\mathrm{enr}}_{\mathcal{E}_{\beta},1}$).

As evident, the approximation generated by the new enriched finite elements outperforms the approximation provided by the traditional Crouzeix--Raviart finite element. This enhancement becomes particularly noteworthy as the number of triangles in a triangulation increases.

 \begin{figure}[ht]
  \centering
    \includegraphics[width=0.3\textwidth]{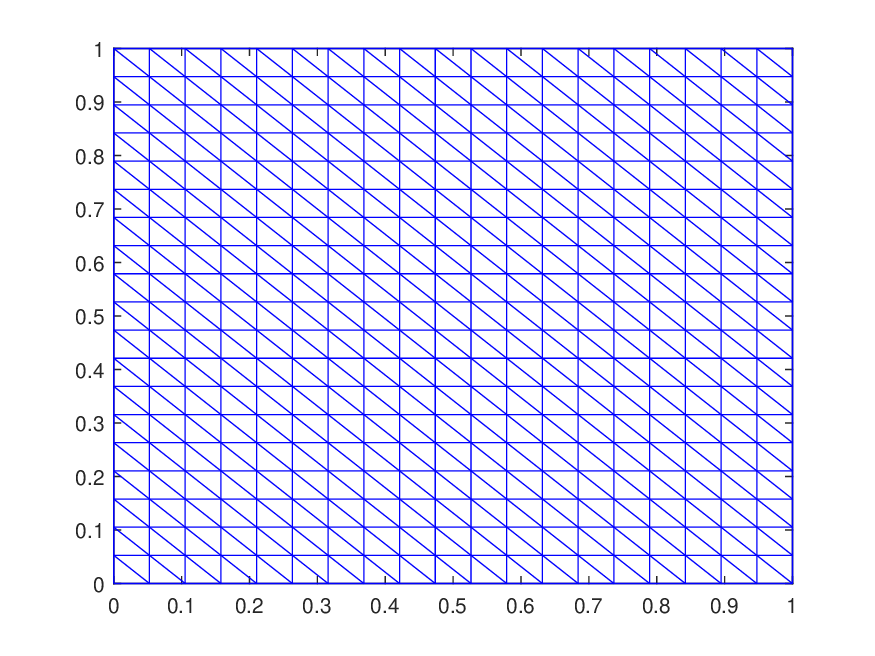} 
        \includegraphics[width=0.3\textwidth]{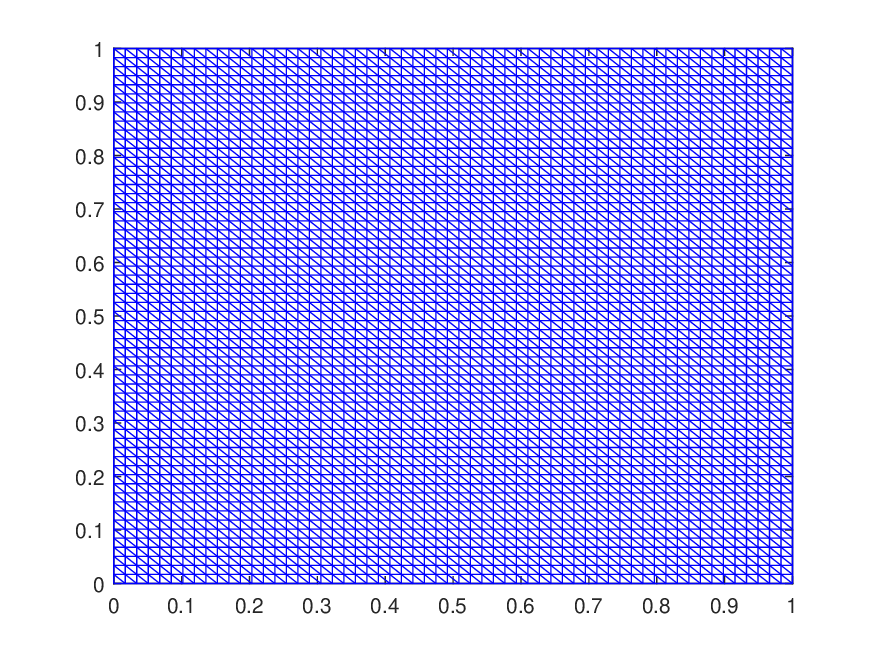} 
             \includegraphics[width=0.3\textwidth]{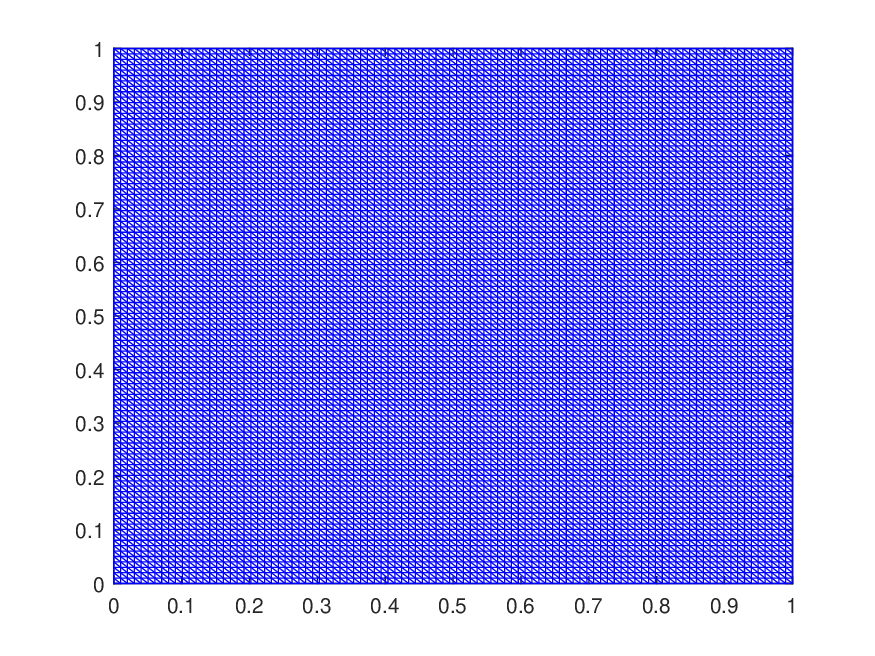} 
         \caption{Delaunay triangulation of $N=722$ (left), $N=6962$ (center) and $N=19602$ (right) tringles.}
          \label{Fig:regulatri}
\end{figure}

 \begin{figure}[ht]
  \centering
    \includegraphics[width=0.3\textwidth]{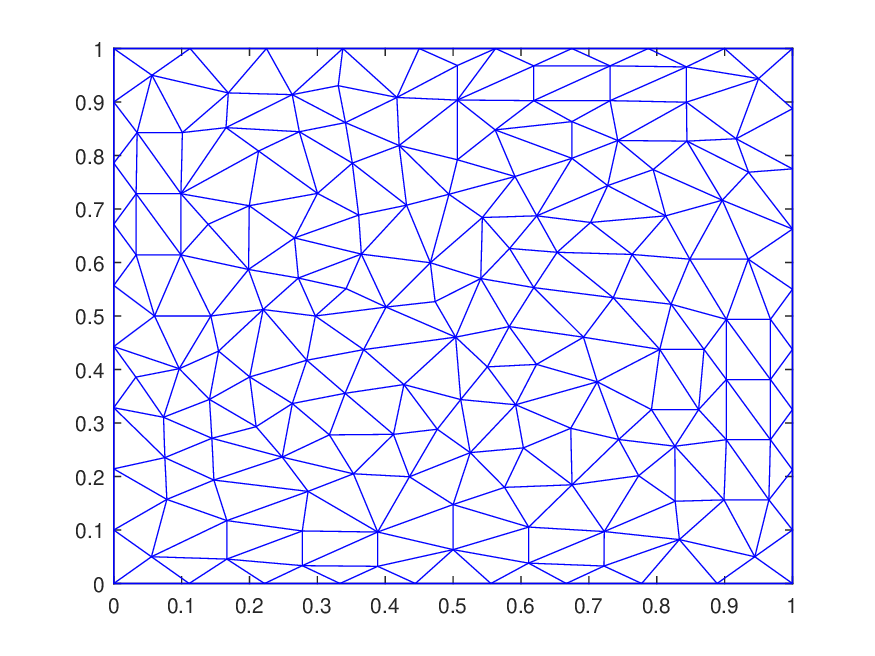} 
        \includegraphics[width=0.3\textwidth]{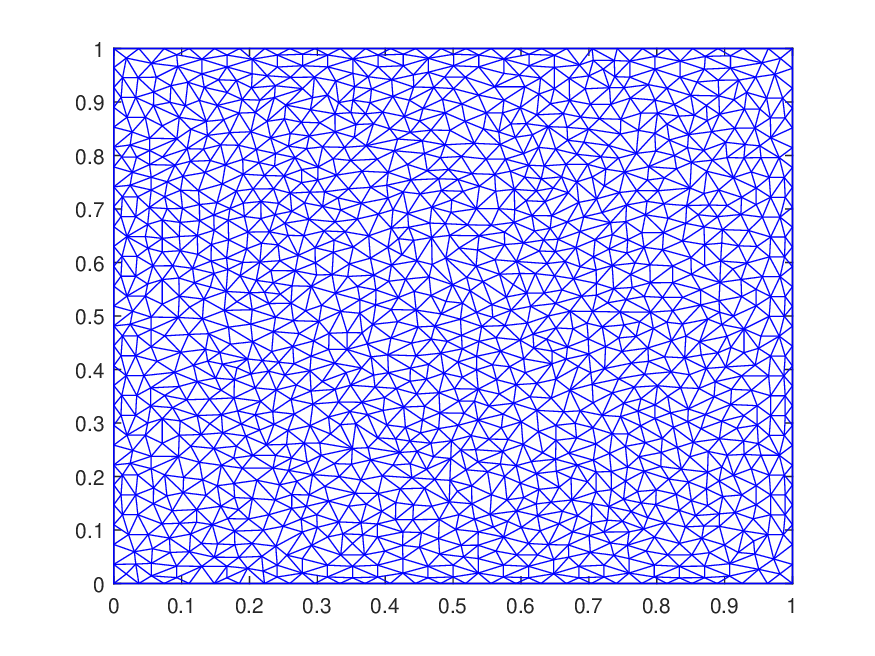}
               \includegraphics[width=0.3\textwidth]{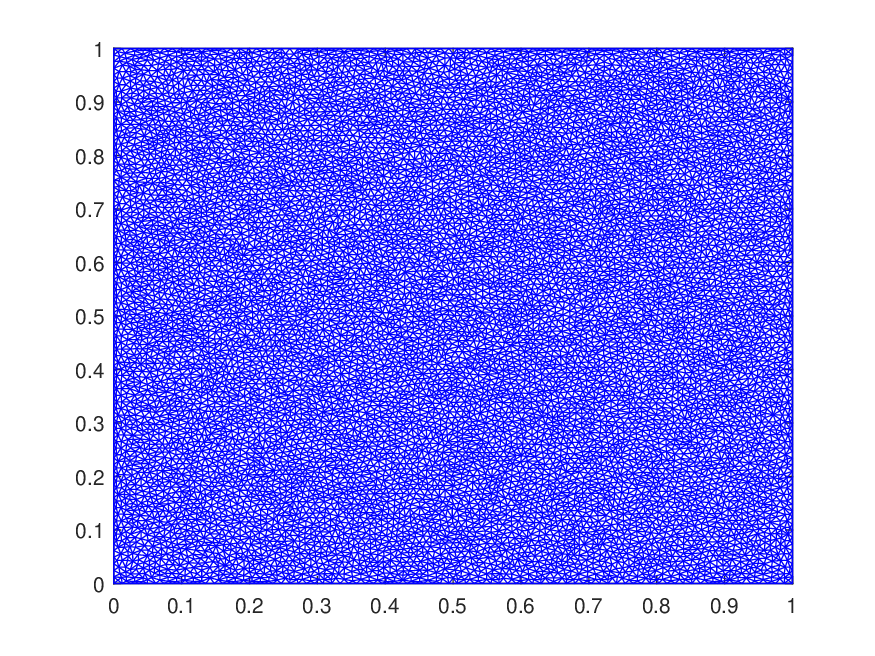}
         \caption{Delaunay triangulation of $N=306$ (left), $N=2650$ (center) and $N=23576$ (right) tringles with no angle smaller than ${20}^{\circ}$.}
          \label{Fig: DelaunayTrian}
\end{figure}

\begin{table}
\centering
   \begin{tabular}{c| c | c | c } 
  &  $E^{\mathrm{CR}}_1$ & $E^{\mathrm{enr}}_{\mathcal{C}\alpha,1}$ & $E^{\mathrm{enr}}_{\mathcal{E}_\beta,1}$ \\ \hline \hline  
$f_1$ & 2.1542e-03 & 9.2333e-05   &    9.3810e-05   \\  
$f_2$ &  4.7276e-06  &  1.0304e-08 & 1.0908e-08       \\  
$f_3$  &  2.2894e-04  &   6.1978e-06 &   6.4967e-06      \\
$f_4$ &  2.8344e-04    &   2.2345e-06   &   2.2874e-06   \\
$f_5$  &  3.1134e-04      &  1.2588e-06&    1.3495e-06   \\
$f_6$ &  5.6037e-07   &    4.2346e-10 &   4.4619e-10   \\ \hline
\end{tabular}
\caption{Comparison between the error in $L^1$-norm produced by the standard Crouzeix--Raviart finite element ($E^{\mathrm{CR}}_1$) with that produced by the enriched approximation operators ${\Pi}_{2,\mathcal{C}_{\alpha}}^{{\mathrm{enr}}}$, defined in~\eqref{pilinch9C} with $\alpha=1$ ($E^{\mathrm{enr}}_{\mathcal{C}_{\alpha},1}$), and ${\Pi}_{2,\mathcal{E}_{\beta}}^{{\mathrm{enr}}}$, defined in~\eqref{pilinch9Cv} with $\beta=1$ ($E^{\mathrm{enr}}_{\mathcal{E}_{\beta},1}$). These comparisons are conducted using the Delaunay triangulation of $N=722$ triangles, see Figure~\ref{Fig:regulatri} (left).}
\label{TabreTri1}
\end{table}

\begin{table}
\centering
   \begin{tabular}{c| c | c | c } 
  &  $E^{\mathrm{CR}}_1$ & $E^{\mathrm{enr}}_{\mathcal{C}\alpha,1}$ & $E^{\mathrm{enr}}_{\mathcal{E}_\beta,1}$ \\ \hline \hline  
$f_1$ & 2.2437e-04 & 3.0933e-06   &   3.1421e-06    \\  
$f_2$ &   4.9026e-07  &  3.4433e-10 &   3.6448e-10       \\  
$f_3$  &  2.3774e-05  &   2.0717e-07 &    2.1716e-07      \\
$f_4$ &  2.9407e-05    &   7.4913e-08   &  7.6675e-08
     \\
$f_5$  &  3.2287e-05      &  4.2038e-08 &  4.5068e-08     \\
$f_6$ &  5.8113e-08   &    1.4150e-11 &  1.4910e-11    \\ \hline
\end{tabular}
\caption{Comparison between the error in $L^1$-norm produced by the standard Crouzeix--Raviart finite element ($E^{\mathrm{CR}}_1$) with that produced by the enriched approximation operators ${\Pi}_{2,\mathcal{C}_{\alpha}}^{{\mathrm{enr}}}$, defined in~\eqref{pilinch9C} with $\alpha=1$ ($E^{\mathrm{enr}}_{\mathcal{C}_{\alpha},1}$), and ${\Pi}_{2,\mathcal{E}_{\beta}}^{{\mathrm{enr}}}$, defined in~\eqref{pilinch9Cv} with $\beta=1$ ($E^{\mathrm{enr}}_{\mathcal{E}_{\beta},1}$).  These comparisons are conducted using the Delaunay triangulation of $N=6962$ triangles, see Figure~\ref{Fig:regulatri} (center).}
\label{TabreTri2}
\end{table}

\begin{table}
\centering
   \begin{tabular}{c| c | c | c } 
  &  $E^{\mathrm{CR}}_1$ & $E^{\mathrm{enr}}_{\mathcal{C}\alpha,1}$ & $E^{\mathrm{enr}}_{\mathcal{E}_\beta,1}$ \\ \hline \hline  
$f_1$ & 7.9716e-05 & 6.5490e-07   &   6.6522e-07    \\  
$f_2$ &  1.7412e-07  &  7.2886e-11 & 7.7151e-11         \\  
$f_3$  &  8.4448e-06  &    4.3854e-08 &   4.5968e-08       \\
$f_4$ &  1.0445e-05    &   1.5861e-08   &   1.6234e-08   \\
$f_5$  &  1.1467e-05     &   8.8980e-09 &  9.5394e-09     \\
$f_6$ &  2.0640e-08   &     2.9953e-12 &   3.1560e-12   \\ \hline
\end{tabular}
\caption{Comparison between the error in $L^1$-norm produced by the standard Crouzeix--Raviart finite element ($E^{\mathrm{CR}}_1$) with that produced by the enriched approximation operators ${\Pi}_{2,\mathcal{C}_{\alpha}}^{{\mathrm{enr}}}$, defined in~\eqref{pilinch9C} with $\alpha=1$ ($E^{\mathrm{enr}}_{\mathcal{C}_{\alpha},1}$), and ${\Pi}_{2,\mathcal{E}_{\beta}}^{{\mathrm{enr}}}$, defined in~\eqref{pilinch9Cv} with $\beta=1$ ($E^{\mathrm{enr}}_{\mathcal{E}_{\beta},1}$). These comparisons are conducted using the Delaunay triangulation of $N=19602$ triangles, see Figure~\ref{Fig:regulatri} (right).}
\label{TabreTri3}
\end{table}

\begin{table}
\centering
   \begin{tabular}{c| c | c | c } 
  &  $E^{\mathrm{CR}}_1$ & $E^{\mathrm{enr}}_{\mathcal{C}\alpha,1}$ & $E^{\mathrm{enr}}_{\mathcal{E}_\beta,1}$ \\ \hline \hline  
$f_1$ & 5.0220e-03 & 3.1623e-04   &    3.3101e-04   \\  
$f_2$ &  1.0948e-05  &   5.9840e-08 & 6.1824e-08      \\  
$f_3$  &  5.5452e-04  &   2.4571e-05 &   2.5882e-05      \\
$f_4$ &  6.9522e-04    &   8.2313e-06   &   8.4936e-06   \\
$f_5$  &  1.5828e-03     &  1.5118e-05 &    1.5723e-05  \\
$f_6$ &  1.3625e-06   &    2.5348e-09 &  2.6162e-09   \\ \hline
\end{tabular}
\caption{Comparison between the error in $L^1$-norm produced by the standard Crouzeix--Raviart finite element ($E^{\mathrm{CR}}_1$) with that produced by the enriched approximation operators ${\Pi}_{2,\mathcal{C}_{\alpha}}^{{\mathrm{enr}}}$, defined in~\eqref{pilinch9C} with $\alpha=1$ ($E^{\mathrm{enr}}_{\mathcal{C}_{\alpha},1}$), and ${\Pi}_{2,\mathcal{E}_{\beta}}^{{\mathrm{enr}}}$, defined in~\eqref{pilinch9Cv} with $\beta=1$ ($E^{\mathrm{enr}}_{\mathcal{E}_{\beta},1}$).  These comparisons are conducted using the Delaunay triangulation of $N=306$ triangles, see Figure~\ref{Fig: DelaunayTrian} (left).}
\label{TabreTri4}
\end{table}

\begin{table}
\centering
   \begin{tabular}{c| c | c | c } 
  &  $E^{\mathrm{CR}}_1$ & $E^{\mathrm{enr}}_{\mathcal{C}\alpha,1}$ & $E^{\mathrm{enr}}_{\mathcal{E}_\beta,1}$ \\ \hline \hline  
$f_1$ &  5.6030e-04 & 1.1742e-05   &    1.2303e-05   \\  
$f_2$ &  1.2010e-06  &   2.1528e-09 & 2.2214e-09      \\  
$f_3$  &  6.1870e-05  &   9.0172e-07 &   9.4571e-07      \\
$f_4$ &  7.6921e-05    &   3.1078e-07   &   3.2010e-07   \\
$f_5$  &  1.7982e-04      &  5.7568e-07 &    5.9968e-07  \\
$f_6$ &  1.4960e-07   &     9.2190e-11 &  9.5052e-11   \\ \hline
\end{tabular}
\caption{Comparison between the error in $L^1$-norm produced by the standard Crouzeix--Raviart finite element ($E^{\mathrm{CR}}_1$) with that produced by the enriched approximation operators ${\Pi}_{2,\mathcal{C}_{\alpha}}^{{\mathrm{enr}}}$, defined in~\eqref{pilinch9C} with $\alpha=1$ ($E^{\mathrm{enr}}_{\mathcal{C}_{\alpha},1}$), and ${\Pi}_{2,\mathcal{E}_{\beta}}^{{\mathrm{enr}}}$, defined in~\eqref{pilinch9Cv} with $\beta=1$ ($E^{\mathrm{enr}}_{\mathcal{E}_{\beta},1}$). These comparisons are conducted using the Delaunay triangulation of $N=2650$ triangles, see Figure~\ref{Fig: DelaunayTrian} (center).}
\label{TabreTri5}
\end{table}

\begin{table}
\centering
   \begin{tabular}{c| c | c | c } 
  &  $E^{\mathrm{CR}}_1$ & $E^{\mathrm{enr}}_{\mathcal{C}\alpha,1}$ & $E^{\mathrm{enr}}_{\mathcal{E}_\beta,1}$ \\ \hline \hline  
$f_1$ &  6.1923e-05 & 4.4045e-07   &    4.6135e-07   \\  
$f_2$ &  1.3353e-07  &   7.8907e-11 & 8.1414e-11      \\  
$f_3$  &  6.8996e-06  &   3.3831e-08 &   3.5476e-08      \\
$f_4$ &  8.6203e-06    &   1.2163e-08   &   1.2538e-08   \\
$f_5$  &  1.9638e-05      &  2.0804e-08 &    2.1686e-08  \\
$f_6$ &  1.6606e-08   &     3.3801e-12 &  3.4848e-12   \\ \hline
\end{tabular}
\caption{Comparison between the error in $L^1$-norm produced by the standard Crouzeix--Raviart finite element ($E^{\mathrm{CR}}_1$) with that produced by the enriched approximation operators ${\Pi}_{2,\mathcal{C}_{\alpha}}^{{\mathrm{enr}}}$, defined in~\eqref{pilinch9C} with $\alpha=1$ ($E^{\mathrm{enr}}_{\mathcal{C}_{\alpha},1}$), and ${\Pi}_{2,\mathcal{E}_{\beta}}^{{\mathrm{enr}}}$, defined in~\eqref{pilinch9Cv} with $\beta=1$ ($E^{\mathrm{enr}}_{\mathcal{E}_{\beta},1}$).  These comparisons are conducted using the Delaunay triangulation of $N=23576$ triangles, see Figure~\ref{Fig: DelaunayTrian} (center).}
\label{TabreTri6}
\end{table}

\section*{Acknowledgments}
This research has been achieved as part of RITA \textquotedblleft Research
 ITalian network on Approximation'' and as part of the UMI group ``Teoria dell'Approssimazione
 e Applicazioni''. The author is a member of the INdAM Research group GNCS.

\bibliographystyle{elsarticle-num}
\bibliography{bibliografia.bib}

\end{document}